\title{Cauchy Integral, Fractional Sobolev Spaces\\ and Chord-Arc Curves}
\let\OLDthebibliography\thebibliography
\renewcommand\thebibliography[1]{
  \OLDthebibliography{#1}
  \setlength{\parskip}{0pt}
  \setlength{\itemsep}{0pt plus 0.3ex}
}
\numberwithin{equation}{section}
\theoremstyle{plain}
\newtheorem{theo}{\protect\theoremname}
  \theoremstyle{plain}
    \newtheorem{prop}[theo]{\protect\propname}
\newtheorem{df}[theo]{Definition}
\numberwithin{theo}{section}
\newtheorem{cor}[theo]{Corollary}
\newtheorem{rem}[theo]{Remark}
\providecommand{\propname}{Proposition}
\providecommand{\lemmaname}{Lemma}
\providecommand{\theoremname}{Theorem}
 \author{
 Huaying Wei\thanks{Center for applied Mathematics, Tianjin University, Weijin Road 92, Tianjin, 300072, P.R. China, email: \url{hywei@tju.edu.cn} } \, and Michel Zinsmeister\thanks{Institut Denis Poisson,  Orl\'eans, 45067,  France, email: \url{zins@univ-orleans.fr}}}
 \date{\today}
\begin{document}

\maketitle

\begin{abstract}
     Let $\Gamma$ be a bounded Jordan curve and  $\Omega_i,\Omega_e$ its two complementary components. For $s\in(0,1)$ we define $\mathcal{H}^s(\Gamma)$ as the set of functions $f:\Gamma\to \mathbb C$ having harmonic extension $u$ in $\Omega_i\cup \Omega_e$ such that
$$ \iint_{\Omega_i\cup \Omega_e} |\nabla u(z)|^2 d(z,\Gamma)^{1-2s}  dxdy<+\infty.$$
If $\Gamma$ is further assumed to be rectifiable we define $H^s(\Gamma)$ as the space of measurable functions $f:\Gamma\to \mathbb C$ such that 
$$\iint_{\Gamma\times \Gamma}\frac{|f(z)-f(\zeta)|^2}{|z-\zeta|^{1+2s}} d\sigma(z)d\sigma(\zeta)<+\infty.$$
When $\Gamma$ is the unit circle these two spaces coincide with the homogeneous fractional Sobolev space defined via Fourier series. For a general rectifiable curve these two spaces need not coincide and our first goal is to investigate the cases of equality: while the chord-arc property is the necessary and sufficient condition for equality in the classical case of $s=1/2$  (see \cite{WZ}), this is no longer the case for general $s\in (0,1)$. We show however that equality holds for Lipschitz curves. 

The second goal involves the  Plemelj-Calder\'on  problem. If $\Gamma$ is rectifiable, following Calder\'on,  it reduces to the boundedness of the Cauchy singular integral operator on the space $H^s(\Gamma)$: we prove it for chord-arc curves using interpolation.
In general, the problem may be stated as follows: given $f\in \mathcal{H}^s(\Gamma)$, is it possible to write
$$ f=F_i|_{\Gamma}+F_e|_{\Gamma},$$
with $F_{i,e}$ holomorphic in $\Omega_{i,e}$ and
$$\iint_{\Omega_{i,e}} | F_{i,e}'(z)|^2 d(z,\Gamma)^{1-2s}  dxdy\le C\|f\|_{\mathcal{H}^s(\Gamma)}^2?$$
Depending on the the fractal nature of $\Gamma$ we show the existence of an interval of values of $s$ for which the property holds. For chord-arc curves this interval is $(0,1)$ and the results for $H^s(\Gamma)$ and $\mathcal{H}^s(\Gamma)$ coincide for Lipschitz curves.

\bigskip 

   \noindent \textbf{Keywords:} Cauchy integral, fractional Sobolev spaces, chord-arc curves, Dirichlet spaces, interpolation
   
   \noindent \textbf{Mathematics Subject Classification 2020:} 42B20, 46E35, 31A05, 30H35

\end{abstract}

\tableofcontents

\section{Introduction}

It is an old problem , given a bounded Jordan curve $\Gamma$ in the plane and a (complex valued) function $f$ defined on $\Gamma$, to find two holomorphic functions $G_i,G_e$ defined on the interior-connected and the exterior-connected components of $\Gamma$ such that 
$$ f=G_i|_\Gamma+G_e|_\Gamma,$$
the boundary traces being defined in some sense.

This problem was actually raised and solved by Sokhotski in 1873  before being rediscovered by Plemelj as a main ingredient of his attempt to solve Hilbert's 22th problem in 1908 (\cite{Ple}). Assume first that $\Gamma$ is  smooth and $f$ is analytic on $\Gamma$. 
The idea is to use the Cauchy integral and define
$$ F(z)=\frac{1}{2\pi i}\int_\Gamma\frac{f(\zeta)}{\zeta-z}d\zeta, \qquad z \in \mathbb C\setminus\Gamma.$$
This defines a function which is holomorphic outside the curve $\Gamma$. Moreover,  this function has boundary values on $\Gamma$ from inside and outside, and one has  Plemelj's formula on $\Gamma$: 
\begin{align*}
    F_i|_{\Gamma} = Tf + 1/2 f;\\
    F_e|_{\Gamma} = Tf-1/2 f,
   \end{align*}
where the Cauchy integral 
$$ Tf(z)= \frac{1}{2\pi i}\text{P.V.}\int_\Gamma \frac{f(\zeta)}{\zeta-z}d\zeta = \frac{1}{2\pi i} \lim\limits_{\varepsilon\to 0}\int_{|\zeta-z|>\varepsilon} \frac{f(\zeta)}{\zeta-z}d\zeta.$$ 
The problem is then solved with $G_i=F_i$ and $G_e=-F_e$.

Of course the result remains true for much more general curves and functions. Subsequent generalizations relax the smoothness requirements on the curve $\Gamma$ and the function $f$: In order for  Plemelj's formula to hold true one indeed only needs $\Gamma$ to be rectifiable and $f\in L^1(\Gamma, d\sigma)$. The modern approach of this problem is then to solve the problem in a given space: Calder\'on (\cite{Cal})
asked for instance for which rectifiable curves it is true that for any $f\in L^2(\Gamma,d\sigma)$ the Plemelj functions $F_i|_\Gamma$ and $F_e|_\Gamma$ are also in  $ L^2(\Gamma,d\sigma)$? Equivalently, when is the operator $T$ bounded on  $f\in L^2(\Gamma,d\sigma)$? This problem has been solved by Coifman-McIntosh-Meyer (\cite{CMM}) for Lipschitz curves (Calder\'on  had previously solved it for curves with small Lipschitz bound (\cite {Cal})) and then the last word was given by David (\cite{Dav}) who proved that $T$ is bounded on  $f\in L^2(\Gamma,d\sigma)$ if and only if $\Gamma$ is Ahlfors-regular, meaning the length of the part of $\Gamma$ lying in a disk of radius $r$ is no bigger than $Cr$ for some  $C$ independent of the disk.

Now the Plemelj-Calder\'on problem may be addressed for curves that are not necessarily  rectifiable. An example has been given in \cite{Zi1} where the Plemelj-Calder\'on problem for H\"older classes has been raised and solved in some particular cases. The idea is to replace the Cauchy integral by the identity
$$ f(z)=-\frac 1\pi \iint_\mathbb{C}\frac{\bar{\partial}f(\zeta)}{\zeta-z}d\xi d\eta$$ 
which is true for any test function $f\in C_c^\infty(\mathbb C)$ (infinitely differentiable functions in $\mathbb C$ with compact support), and from which it follows that, in the case of rectifiable boundary,
\begin{align*}\label{ple}
    F_i(z)&=-\frac 1\pi \iint_{\Omega_e}\frac{\bar{\partial}f(\zeta)}{\zeta-z}d\xi d\eta,\qquad z \in \Omega_i;\\
    F_e(z)&=\frac 1\pi \iint_{\Omega_i}\frac{\bar{\partial}f(\zeta)}{\zeta-z}d\xi d\eta,\qquad z \in \Omega_e
\end{align*}
where $\Omega_{i,e}$ stand for the interior and exterior components of $\Gamma$. Later Astala (\cite{Ast}) obtained more precise results for the H\"older classes: these will be discussed in Section 3 where we will draw the parallel with fractional Sobolev spaces.

The principal aim of this paper is to investigate the Plemelj-Calder\'on problem for the fractional Sobolev spaces. There will be two angles of attack, depending on whether the curve is rectifiable or not, following the preceding lines. In order to understand these two strategies, let us start with the simplest case of $\Gamma$ being the unit circle $\mathbb T$. Within this framework we adopt the powerful tool of Fourier analysis: for $0 \leq s<1$ we define the  fractional Sobolev space $H^s(\mathbb T)$ as the set of $L^2(\mathbb T)$-functions $f$ such that the series
$$ \sum\limits_{n\in \mathbb Z}|n|^{2s}|\hat{f}(n)|^2$$
converges. 
 An application of Parseval's formula shows that this series is equal, up to a multiplication constant, to $\|f\|_{L^2(\mathbb T)}^2$ for $s = 0$, and to 
$$ \|f\|_{H^s(\mathbb T)}^2=\iint_{\mathbb T\times\mathbb T}\frac{|f(z)-f(\zeta)|^2}{|z - \zeta|^{1+2s}} |dz||d\zeta|$$
for $0 < s < 1$. 
We will call this last quantity the (square of the) Douglas norm.   
Another point of view is what we will call the Littlewood-Paley one: it can be computed directly using Parseval's formula (see \cite{AS}, \cite{Ste} and also \cite{Tay}) that for $0 \leq s < 1$ the series is also equivalent to
$$ \|u_i\|_{\mathcal{H}^s(\mathbb{D}_i)}^2 = \iint_{\mathbb {D}_i}|\nabla u_i(z)|^2(1-|z|^2)^{1-2s} dxdy.$$
Notice that when $s = 0$ if $1-|z|^2$ in the integrand is replaced equivalently by $1-|z|$ then this integral is just the mean value of the Littlewood-Paley $\mathsf g$-function of $u_i$ on the unit circle $\mathbb T$ multiply by $2\pi$. 
Using Schwarz reflection, we can see that is equivalent to
$$
   \|u_e\|_{\mathcal{H}^s(\mathbb D_e)}^2 = \iint_{\mathbb D_e} |\nabla u_e(z)|^2(|z|^2-1)^{1-2s}dxdy
    $$
where $u_i$ and $u_e$ stand for the harmonic extensions of $f$ to the interior domain $\mathbb D_i$ and the exterior domain $\mathbb D_e$ of the unit circle $\mathbb T$, respectively. Notice that for $s = 1/2$ these equivalences reduce to equalities (up to a multiplication constant).

For a general Jordan curve $\Gamma$, one may define similarly $\mathcal{H}^s(\Omega_{i,e})$ as being the set of harmonic functions $u$ defined on  $\Omega_{i,e}$ by its (square) norm
$$ \|u\|_{\mathcal{H}^s(\Omega_{i,e})}^2=\iint_{\Omega_{i,e}}|\nabla u(z)|^2 d(z,\Gamma)^{1-2s}dxdy,$$
where $d(z,\Gamma)$ stands for the distance from $z$ to $\Gamma$. We would like to identify the elements $u\in \mathcal{H}^s(\Omega_{i,e})$ with their ``boundary values''. This can be done if $\Gamma$ is a quasicircle and  $s \in (1/2, 1)$. 
\begin{prop} 
Let $\Omega$ be a domain bounded by a bounded Jordan curve $\Gamma$. For any $u\in \mathcal{H}^s(\Omega)$ with $s \in (1/2, 1)$ 
there exists a constant $C>0$ such that 
$$|\nabla u(z)|\le Cd(z,\Gamma)^{s-3/2}, \qquad z \in \Omega.$$
\end{prop}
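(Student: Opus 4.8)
The plan is to exploit the subharmonicity of the squared gradient together with the sub-mean-value property on a Whitney-type disk, so that the pointwise size of $\nabla u$ at a point $z$ is controlled by the weighted $L^2$-mass of $\nabla u$ on a disk of radius comparable to $d(z,\Gamma)$.

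First I would reduce the statement to subharmonicity. Writing $u=u_1+iu_2$ with $u_1,u_2$ real harmonic, each of the four real partials $\partial_x u_1,\partial_y u_1,\partial_x u_2,\partial_y u_2$ is a real harmonic function; hence each of their squares is subharmonic, and therefore $|\nabla u|^2=|\partial_x u|^2+|\partial_y u|^2$ is subharmonic on $\Omega$. In particular it satisfies the sub-mean-value inequality over every disk compactly contained in $\Omega$.

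Next, fix $z\in\Omega$ and set $\delta:=d(z,\Gamma)$. The disk $D(z,\delta/2)$ lies in $\Omega$, and for every $w$ in it one has $\delta/2\le d(w,\Gamma)\le 3\delta/2$, so that $d(w,\Gamma)$ is comparable to $\delta$ on the whole disk. The sub-mean-value inequality gives
$$|\nabla u(z)|^2\le \frac{4}{\pi\delta^2}\iint_{D(z,\delta/2)}|\nabla u(w)|^2\,dx\,dy.$$
The key observation is that, because $d(w,\Gamma)\asymp\delta$ on this Whitney-type disk, any fixed power of the weight is essentially constant there; in particular $d(w,\Gamma)^{2s-1}\le C\,\delta^{2s-1}$ for an absolute constant $C$, regardless of the sign of $2s-1$. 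Inserting $1=d(w,\Gamma)^{1-2s}\,d(w,\Gamma)^{2s-1}$ into the integrand and pulling out the comparable factor yields
$$\iint_{D(z,\delta/2)}|\nabla u(w)|^2\,dx\,dy\le C\,\delta^{2s-1}\iint_{D(z,\delta/2)}|\nabla u(w)|^2\,d(w,\Gamma)^{1-2s}\,dx\,dy\le C\,\delta^{2s-1}\,\|u\|_{\mathcal{H}^s(\Omega)}^2.$$
Combining the two displays gives $|\nabla u(z)|^2\le C\,\delta^{2s-3}\,\|u\|_{\mathcal{H}^s(\Omega)}^2$, that is, $|\nabla u(z)|\le C\,d(z,\Gamma)^{s-3/2}$ with $C$ depending on $\|u\|_{\mathcal{H}^s(\Omega)}$, as desired.

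There is no serious obstacle here: the argument is a standard sub-mean-value estimate on a Whitney disk, and the restriction $s\in(1/2,1)$ plays no essential role in the estimate itself (it matters only for the downstream use, where the exponent $s-3/2>-1$ guarantees integrability of the gradient up to $\Gamma$ and hence the existence of boundary values). The only point requiring care is the bookkeeping of exponents and the verification that the weight is comparable to $\delta^{1-2s}$ on $D(z,\delta/2)$, so that it can be extracted from the integral; the final exponent $s-3/2$ then emerges simply from combining the factor $\delta^{-2}$ coming from the area normalization with the factor $\delta^{2s-1}$ coming from the weight.
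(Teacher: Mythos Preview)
Your proof is correct and follows essentially the same approach as the paper: both arguments estimate $|\nabla u(z)|^2$ by its average over the Whitney disk $D(z,d(z,\Gamma)/2)$ and then insert the weight $d(\cdot,\Gamma)^{1-2s}$, which is comparable to a constant there. The only cosmetic difference is that the paper obtains the sub-mean-value inequality via the mean-value property of the harmonic vector $\nabla u$ combined with Cauchy--Schwarz, whereas you invoke subharmonicity of $|\nabla u|^2$ directly; these yield the identical inequality.
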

\begin{proof}
    Let $z\in \Omega$ and $D$ is the disk $D(z,d(z,\Gamma)/2)$. By the mean value property of harmonic functions one can write
$$ \nabla u(z)=\frac{1}{|D|}\iint_D\nabla u(\zeta)d\xi d\eta$$
(Here and in the sequel we write $|D|$ for the Lebesgue measure of $D$.) 
so that, by Cauchy-Schwarz inequality,
\begin{align*}
|\nabla u(z)|^2&\le \frac{4}{\pi d(z, \Gamma)^2}\iint_D|\nabla u(\zeta)|^2d\xi d\eta\\
&\le  Cd(z, \Gamma)^{2s-3}\iint_D|\nabla u(\zeta)|^2 d(\zeta,\Gamma)^{1-2s} d\xi d\eta\\
& \leq C\|u\|_{\mathcal{H}^s(\Omega)}^2d(z,\Gamma)^{2s-3}.
\end{align*}
\end{proof}
When $\Gamma$ is a quasicircle we may apply the following result due to Gehring-Martio (\cite{GM}). 
\begin{prop} Let $\Omega$ be a  domain bounded by a quasicircle $\Gamma$ and $\alpha \in (0,1]$. The following are equivalent: 
\begin{enumerate}
\item[\rm(1)] $\forall z\in \Omega,\; |\nabla u(z)|\le Cd(z,\Gamma)^{\alpha - 1}$ for some constant $C > 0$;
\item[\rm(2)] $u\in \Lambda^\alpha(\bar{\Omega})$ .
\end{enumerate}
\end{prop}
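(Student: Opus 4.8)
The plan is to prove the two implications separately. The implication (2)$\Rightarrow$(1) is elementary and, in fact, does not use the quasicircle hypothesis at all, while (1)$\Rightarrow$(2) rests entirely on the geometry of quasidisks. For (2)$\Rightarrow$(1) I would fix $z\in\Omega$, set $\delta=d(z,\Gamma)$, and exploit that $u$ is harmonic on the disk $D=D(z,\delta/2)$. The standard interior gradient estimate for harmonic functions gives
$$|\nabla u(z)|\le \frac{C}{\delta}\sup_{\zeta\in D}|u(\zeta)-u(z)|,$$
where subtracting the constant $u(z)$ is harmless since it does not affect $\nabla u$. If $u\in\Lambda^\alpha(\bar\Omega)$, then $|u(\zeta)-u(z)|\le C'|\zeta-z|^\alpha\le C'(\delta/2)^\alpha$ for $\zeta\in D$, and combining the two bounds yields $|\nabla u(z)|\le C''\delta^{\alpha-1}$, which is exactly (1).

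For the converse (1)$\Rightarrow$(2) the key geometric input is that a quasidisk is a \emph{uniform domain}: there is a constant $K\ge 1$ such that any two points $z,w\in\Omega$ can be joined by a rectifiable arc $\gamma\subset\Omega$, parametrized by arclength $t\in[0,L]$ with $L=\ell(\gamma)\le K|z-w|$, obeying the cigar condition $d(\gamma(t),\Gamma)\ge K^{-1}\min(t,L-t)$. Granting this, I would integrate the gradient bound (1) along $\gamma$ and, using $\alpha-1\le 0$ to raise the cigar inequality to the power $\alpha-1$, obtain
$$|u(z)-u(w)|\le\int_0^L|\nabla u(\gamma(t))|\,dt\le C\int_0^L d(\gamma(t),\Gamma)^{\alpha-1}\,dt\le CK^{1-\alpha}\int_0^L\min(t,L-t)^{\alpha-1}\,dt.$$
The last integral is elementary and equals $\tfrac{2^{1-\alpha}}{\alpha}L^\alpha$ (here $\alpha>0$ is exactly what guarantees convergence), so $|u(z)-u(w)|\le C'''L^\alpha\le C''''|z-w|^\alpha$. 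This is the Hölder estimate for interior points; since it forces $u$ to be uniformly continuous on $\Omega$, $u$ extends continuously to $\bar\Omega$ and the same inequality persists on the closure, giving $u\in\Lambda^\alpha(\bar\Omega)$.

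The main obstacle, and the only place where the hypothesis on $\Gamma$ is used, is the uniform-domain (cigar) property of quasidisks. This is a classical characterization, which I would invoke from Gehring--Martio (\cite{GM}) rather than reprove, but it is genuinely essential: it is precisely what converts the infinitesimal bound (1) into a global modulus of continuity by controlling the \emph{internal} distance between boundary-adjacent points in terms of the Euclidean distance. The hypothesis cannot be dropped, since for a domain with an outward-pointing cusp the cigar condition fails, and one can then construct harmonic functions satisfying (1) that are not Hölder up to $\Gamma$: two points on opposite sides of the cusp are close in the plane yet far in the internal metric, so integrating $\nabla u$ between them accumulates too much. Thus the quasicircle assumption is exactly the geometric condition making the two formulations equivalent.
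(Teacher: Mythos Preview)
Your proof is correct and is essentially the standard Gehring--Martio argument. The paper itself does not give a proof of this proposition: it merely quotes the result, attributing it to Gehring--Martio \cite{GM}. What you have written is precisely the proof one finds (in substance) in that reference---the elementary interior gradient estimate for $(2)\Rightarrow(1)$, and for $(1)\Rightarrow(2)$ the use of the uniform-domain (cigar) property of quasidisks to integrate the pointwise gradient bound along a John arc. So there is nothing to compare against in the paper beyond the citation, and your argument supplies exactly the details the citation stands in for.
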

\noindent In this statement $\Lambda^\alpha(\bar{\Omega})$ stands for the space of H\"older functions of order $\alpha$. This settles the case $s \in (1/2, 1)$: boundary values of functions in $\mathcal{H}^s(\Omega_{i,e})$ are well-defined and they characterize $u$, as being the unique harmonic extension. We may now define the space $\mathcal{H}^s(\Gamma)$ for this range $s\in (1/2,1)$ as being the space of 
$(s-1/2)$-H\"older functions $f$ on $\Gamma$ whose harmonic extension $u_{i,e}$ to $\Omega_{i,e}$ belongs to $\mathcal{H}^s(\Omega_{i,e})$, and the space  $\mathcal{H}^s(\Gamma)$ is assigned the natural Hilbert norm $\|\cdot\|_{\mathcal{H}^s(\Gamma)}$ so that $\|f\|^2_{\mathcal{H}^s(\Gamma)} = \Vert u_i\Vert^2_{\mathcal{H}^s(\Omega_i)} + \Vert u_e\Vert^2_{\mathcal{H}^s(\Omega_e)}$.

The case $s=1/2$ has been treated by Schippers-Staubach  when $\Gamma$ is a quasicircle (\cite{Sc1}):  the boundary values of functions in  $\mathcal{H}^{1/2}(\Omega_{i,e})$ is well-defined by taking radial limits (see Section 2 for a more explicit explanation). We may also define the space  $\mathcal{H}^{1/2}(\Gamma)$ as being the space of functions $f$ on $\Gamma$ whose harmonic extension $u_{i,e}$ to $\Omega_{i,e}$ belongs to $\mathcal{H}^{1/2}(\Omega_{i,e})$. It is a quite large class of functions.  In particular, by Dirichlet's principle, it is the closure of the space $D(\Gamma)$ of restrictions to $\Gamma$ of $C_c^\infty(\mathbb C)$ functions under the Hilbert norm in $\mathcal{H}^{1/2}(\Gamma)$, defined as the case of $s\in (1/2,1)$ (see \cite{WZ}).

For $0 \leq s<1/2$ the situation is less clear if $\Gamma$ is not rectifiable. 
In order to overcome this difficulty,  we consider the space $D(\Gamma)$ and define $D(\Omega_{i,e})$ as the set of harmonic extensions of functions in $D(\Gamma)$ to $\Omega_{i,e}$.
\begin{prop} Let $\Omega$ be a domain bounded by a bounded Jordan curve $\Gamma$. 
If $0 \leq s<1/2$ then 
$$D(\Omega)\subset\mathcal{H}^s(\Omega).$$
\end{prop}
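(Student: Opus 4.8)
The plan is to reduce the weighted estimate to the unweighted Dirichlet‑energy case $s=1/2$, and then to control the weight $d(z,\Gamma)^{1-2s}$ separately on the bounded and the unbounded parts of $\Omega$. Fix $\phi\in C_c^\infty(\mathbb C)$ and let $u\in D(\Omega)$ be the harmonic extension of $f=\phi|_\Gamma$. First I would record that $u$ has finite Dirichlet energy: since $\phi|_\Omega$ is an admissible competitor (smooth, of finite energy, and agreeing with $f$ on $\Gamma$) and $\iint_{\mathbb C}|\nabla\phi|^2<\infty$, Dirichlet's principle gives
$$\iint_\Omega |\nabla u(z)|^2\,dx\,dy \le \iint_\Omega |\nabla \phi(z)|^2\,dx\,dy \le \iint_{\mathbb C}|\nabla\phi(z)|^2\,dx\,dy < \infty,$$
that is, $D(\Omega)\subset\mathcal H^{1/2}(\Omega)$. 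This is the input I will leverage in both cases.

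If $\Omega=\Omega_i$ is the bounded component, the argument finishes immediately. Since $0\le s<1/2$ we have $1-2s\in(0,1]$, and $d(z,\Gamma)\le\operatorname{diam}\Omega_i=:R<\infty$, so $d(z,\Gamma)^{1-2s}\le R^{1-2s}$ and
$$\|u\|_{\mathcal H^s(\Omega_i)}^2 \le R^{1-2s}\iint_{\Omega_i}|\nabla u|^2\,dx\,dy<\infty.$$

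The only real work is the unbounded component $\Omega=\Omega_e$, where the weight blows up at infinity. Here I would fix $R_0$ with $\Gamma\subset\{|z|\le R_0\}$ and split $\Omega_e$ into the bounded piece $A=\Omega_e\cap\{|z|\le 2R_0\}$ and the outer piece $B=\{|z|>2R_0\}$. On $A$ the weight is again bounded, so that contribution is controlled by the finite Dirichlet energy exactly as above. On $B$ I would exploit the decay of $\nabla u$ at infinity: the harmonic extension is the bounded solution of the exterior Dirichlet problem, hence has a finite limit $u(\infty)$, and $u-u(\infty)$ admits on $\{|z|>R_0\}$ an expansion whose leading term is $\operatorname{Re}(c_1/z)$, giving $|\nabla u(z)|=O(|z|^{-2})$. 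Since $|z|/2\le d(z,\Gamma)\le \tfrac32|z|$ on $B$, the weighted integrand is $O(|z|^{-4}\cdot|z|^{1-2s})=O(|z|^{-3-2s})$, whence
$$\iint_B |\nabla u|^2 d(z,\Gamma)^{1-2s}\,dx\,dy \lesssim \int_{2R_0}^\infty r^{-3-2s}\,r\,dr=\int_{2R_0}^\infty r^{-2-2s}\,dr<\infty,$$
because $2+2s>1$. Adding the two pieces gives $u\in\mathcal H^s(\Omega_e)$.

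The step I expect to be the main obstacle is justifying the gradient decay $|\nabla u|=O(|z|^{-2})$ rigorously: one must argue that the relevant extension to the exterior is the \emph{bounded} one and invoke the Laurent‑type expansion of bounded harmonic functions near infinity, ruling out the $\log|z|$ and the growing terms by boundedness. Note that one cannot shortcut this by a weighted Dirichlet principle, since the harmonic $u$ need not minimize the weighted energy. Everything else reduces to the elementary bound $1-2s>0$ on the weight and the convergence of the polar integral.
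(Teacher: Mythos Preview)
Your argument is correct and follows the same route as the paper: invoke Dirichlet's principle to get $u\in\mathcal H^{1/2}(\Omega)$, then pass to $\mathcal H^s(\Omega)$ via the inclusion $\mathcal H^{1/2}(\Omega)\subset\mathcal H^s(\Omega)$ for $0\le s<1/2$. The paper's proof simply asserts this inclusion, whereas you actually justify it in the exterior case (bounded weight near $\Gamma$, and $|\nabla u|=O(|z|^{-2})$ at infinity to handle the growing weight), which is a welcome elaboration of a step the paper leaves implicit.
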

\begin{proof}
    Let $F$ be a $C_c^\infty(\mathbb C)$ function such that $f=F|_\Gamma\in D(\Gamma)$. Then obviously 
$$\iint_{\Omega}|\nabla F(z)|^2dxdy<\infty.$$
By the Dirichlet principle, we then have that $u\in \mathcal{H}^{1/2}(\Omega)\subset \mathcal{H}^s(\Omega)$ where $u$ is the harmonic extension of $f$ to $\Omega$.
\end{proof}
Inspired by the case $s = 1/2$ we finally define $\mathcal{H}^s(\Gamma)$ for the range $s \in [0, 1/2)$ as the completion of $D(\Gamma)$ for the naturally associated Hilbert norm as above. It is easy to see the inclusion relations $\mathcal H^{s_1}(\Gamma)\subset \mathcal H^{1/2}(\Gamma) \subset \mathcal H^{s_2}(\Gamma)$ for any $0 \leq  s_2 < 1/2 < s_1 < 1$.

When the curve $\Gamma$ is furthermore assumed to be rectifiable we may also define the space $H^s(\Gamma)$, $0 < s < 1$, through (the square of) its Douglas-type norm
$$ \|f\|_{H^s(\Gamma)}^2=\iint_{\Gamma\times\Gamma}\frac{|   f(z)-f(\zeta)|^2}{|z-\zeta|^{1+2s}}d\sigma(z)d\sigma(\zeta)$$
where $d\sigma$ denotes the arc-length measure. 

Recall that when $\Gamma = \mathbb T$ the norms $\|f\|_{H^s(\mathbb T)}$, $\|u_i\|_{\mathcal{H}^s(\mathbb D_i)}$ and $\|u_e\|_{\mathcal{H}^s(\mathbb D_e)}$ are comparable for $0<s<1$, but this comparability has no reason to hold for a general rectifiable Jordan curve $\Gamma$. One of the purposes of this paper is to investigate the comparable case, in connection with  the Plemelj-Calder\'on problem. Let us be more precise about what we mean by the Plemelj-Calder\'on problem in the two settings introduced above:
\begin{enumerate}
\item[(a)] If $\Gamma $ is rectifiable, the problem reduces to proving that the operator $T$ from  Plemelj's formula is bounded on $H^s(\Gamma), 0<s<1$.
\item[(b)] In the general case we ask whether there exists $C>0$ such that if $f\in\mathcal{H}^s(\Gamma)$, $0<s<1$, 
$$ \|F_{i,e}\|_{\mathcal{H}^s(\Omega_{i,e})}\le C\|f\|_{\mathcal{H}^s(\Gamma)}.$$
\end{enumerate}
In Section $2$ we will consider the special case $s=1/2$ of setting (b) with the use of the Beurling operator. In Section $3$ we will consider the general case of (b) using Astala's generalization of Ahlfors-regularity.  More precisely,  we will define a kind of Minkowski content $h_\delta(\Gamma)$ for dimension $1\le \delta\le 2$, call $\Gamma$ to be $\delta$-regular if there exists $C>0$ such that $h_\delta(\Gamma\cap D(z,r))\le Cr^\delta$ for all $z\in \Gamma$ and $r>0$ and define $h(\Gamma)$ as being the infinimum of the $\delta's$ such that $\Gamma $ is $\delta$-regular. We prove then
\begin{theo} 
Let $\Gamma$ be a quasicircle. Then the Calder\'on-Plemelj property is true for $\mathcal{H}^s(\Gamma)$ if
$$ \frac{h(\Gamma)-1}{2}<s<\frac{3-h(\Gamma)}{2}.$$
\end{theo}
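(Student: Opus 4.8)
The plan is to settle the problem in the form (b) by taking for $F_i,F_e$ the Cauchy/$\bar\partial$ representatives recalled in the introduction. Given $f\in\mathcal H^s(\Gamma)$ with harmonic extension $u$ (equal to $u_i$ on $\Omega_i$ and $u_e$ on $\Omega_e$), set
$$ F_i(z)=-\frac1\pi\iint_{\Omega_e}\frac{\bar\partial u(\zeta)}{\zeta-z}\,d\xi d\eta\quad(z\in\Omega_i),\qquad F_e(z)=\frac1\pi\iint_{\Omega_i}\frac{\bar\partial u(\zeta)}{\zeta-z}\,d\xi d\eta\quad(z\in\Omega_e). $$
These are holomorphic in their domains, and by the $\bar\partial$-identity one has $f=F_i|_\Gamma+F_e|_\Gamma$. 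Since $|\bar\partial u|\le|\nabla u|$, the weighted $L^2$ mass of the datum $\bar\partial u$ on each side is dominated by $\|u_{i,e}\|_{\mathcal H^s(\Omega_{i,e})}^2\le\|f\|_{\mathcal H^s(\Gamma)}^2$. Differentiating under the integral sign gives $F_i'(z)=-\frac1\pi\iint_{\Omega_e}\bar\partial u(\zeta)\,(\zeta-z)^{-2}\,d\xi d\eta$, so the whole theorem reduces to one weighted bound: the integral operator with kernel $(\zeta-z)^{-2}$ maps $L^2(\Omega_e,d(\cdot,\Gamma)^{1-2s})$ into $L^2(\Omega_i,d(\cdot,\Gamma)^{1-2s})$, together with the mirror statement for $F_e'$ obtained by swapping $\Omega_i$ and $\Omega_e$.

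I would prove this bound with a Schur test. Write $d_z=d(z,\Gamma)$; the decisive geometric fact is that $z$ and $\zeta$ lie on opposite sides of $\Gamma$, so $[z,\zeta]$ crosses $\Gamma$ and $|\zeta-z|\ge\max(d_z,d_\zeta)$. After absorbing the weights, the kernel becomes $\mathcal K(z,\zeta)=(\zeta-z)^{-2}(d_z/d_\zeta)^{(1-2s)/2}$; testing against the symmetric functions $d_z^{-c}$ and $d_\zeta^{-c}$ reduces both Schur conditions to integrals of the type $I_\alpha(z)=\iint_{\Omega_e}d_\zeta^{\alpha}|\zeta-z|^{-2}\,d\xi d\eta$ and its mirror over $\Omega_i$. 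I would evaluate $I_\alpha$ by a double dyadic decomposition, first into annuli $\{|\zeta-z|\approx 2^kd_z\}$ ($k\ge0$) and then, inside each annulus, into layers $\{d_\zeta\approx 2^{k-j}d_z\}$ ($j\ge0$). Here the $\delta$-regularity of $\Gamma$ enters through the neighbourhood estimate $|\{d(\cdot,\Gamma)<t\}\cap D(z,r)|\lesssim t^{2-\delta}r^\delta$, which bounds the area of the $(k,j)$-layer by $2^{2k-j(2-\delta)}d_z^2$. Summing, the inner ($j$) sum converges exactly when $\alpha>\delta-2$, and if moreover $\alpha<0$ the outer ($k$) sum converges and yields $I_\alpha(z)\approx d_z^{\alpha}$, the far field and the unbounded end of $\Omega_e$ being harmless precisely because $\alpha<0$.

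Substituting $I_\alpha\approx d_z^\alpha$ back, the powers of $d_z$ and $d_\zeta$ cancel in both Schur conditions, and the admissibility bands $\delta-2<\alpha<0$ for the two relevant exponents translate into $|s-\tfrac12|<c<\min(\tfrac32+s-\delta,\tfrac52-s-\delta)$. Such a $c$ exists if and only if $\frac{\delta-1}{2}<s<\frac{3-\delta}{2}$; letting $\delta$ decrease to $h(\Gamma)$ gives the interval in the statement. The main obstacle I anticipate is the geometric input: deriving the localized neighbourhood/area estimate from the definition of $\delta$-regularity so that the layer areas obey the clean bound above, and controlling the scales beyond $\operatorname{diam}\Gamma$ and the unbounded component $\Omega_e$, where the content hypothesis degenerates but the negativity of $\alpha$ must be exploited to recover convergence. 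The quasicircle hypothesis serves two further purposes: it guarantees $h(\Gamma)<2$, so that the interval is nonempty and symmetric about $s=\tfrac12$, and it underpins the harmonic-extension and boundary-value framework of Propositions 1.1–1.2 on both sides of $\Gamma$.
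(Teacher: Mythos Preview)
Your overall strategy is sound and leads to the same range of $s$, but it is \emph{not} the route taken in the paper. The paper does not run a Schur test at all. Instead, it recognizes $F_i'=B(\chi_{\Omega_e}\bar\partial u_e)$ and $F_e'=-B(\chi_{\Omega_i}\bar\partial u_i)$, where $B$ is the Beurling transform (convolution with $-\tfrac1\pi\,\mathrm{P.V.}\,z^{-2}$), and then simply quotes the Coifman--Fefferman theorem: $B$, being a Calder\'on--Zygmund operator, is bounded on $L^2(\mathbb C,\omega)$ whenever $\omega\in A_2$. The problem is thus reduced to deciding when the single weight $\omega(z)=d(z,\Gamma)^{1-2s}$ belongs to $A_2$ on the plane. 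This is exactly the content of Astala's theorem (Theorem~\ref{As1} here): for a porous curve, $d(\cdot,\Gamma)^{\alpha-1}\in A_2\iff d(\cdot,\Gamma)^{\alpha-1}\in A_1\iff \alpha>h(\Gamma)-1$. Applying this with $\alpha=2s$ (when $s<1/2$) or $\alpha=2-2s$ (when $s>1/2$, after the $A_2$ symmetry $\omega\in A_2\iff\omega^{-1}\in A_2$) yields precisely the interval $\frac{h(\Gamma)-1}{2}<s<\frac{3-h(\Gamma)}{2}$.

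What each approach buys: the paper's argument is short and modular---the geometry of $\Gamma$ enters only through Astala's $A_1$ characterization, and the operator theory is outsourced to a single black box. Your direct Schur test is more self-contained and makes the mechanism transparent; in effect you are reproving the relevant piece of the weighted inequality by hand, using the layer--area estimate $|\{d(\cdot,\Gamma)<t\}\cap D(z,r)|\lesssim t^{2-\delta}r^\delta$ that is essentially equivalent to $\delta$-regularity. One caution: your sentence ``by the $\bar\partial$-identity one has $f=F_i|_\Gamma+F_e|_\Gamma$'' hides real work. The paper first establishes this identity for $f\in D(\Gamma)$ (restrictions of $C_c^\infty$ functions) using \eqref{convolution}--\eqref{split}, passes from $f$ to the harmonic extension $u$ via the integration-by-parts step \eqref{W11}, and only then extends to general $f\in\mathcal H^s(\Gamma)$ by density (for $s\le 1/2$) or by inclusion in $\mathcal H^{1/2}(\Gamma)$ (for $s>1/2$). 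You will need that approximation step as well.
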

\noindent In Section $4$ we come to setting (a) and prove
\begin{theo} The Calder\'on-Plemelj property holds for $H^s(\Gamma),\,0 < s < 1$, when $\Gamma$ is a chord-arc curve.
\end{theo}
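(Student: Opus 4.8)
The plan is to treat the two endpoints $s=0$ and $s=1$ of the fractional scale separately and to recover the whole range $0<s<1$ by interpolation. To make the interpolation transparent I would first transport the problem to the circle. Parametrize $\Gamma$ by arc length, $\gamma:\mathbb R/L\mathbb Z\to\Gamma$ with $|\gamma'|=1$ a.e. The chord-arc hypothesis says precisely that $\gamma$ is bi-Lipschitz, so $|\gamma(t)-\gamma(u)|\asymp|t-u|$ in the circular distance. Pulling back through $\gamma$ then identifies $H^s(\Gamma)$ with the fractional Sobolev space $H^s(\mathbb T_L)$ with comparable norms for every $0<s<1$, since the Douglas double integral defining $\|\cdot\|_{H^s(\Gamma)}$ is transformed into one with a comparable kernel. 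Under this identification the Cauchy operator becomes
\[
\tilde T g(t)=\frac{1}{2\pi i}\,\mathrm{P.V.}\!\int_0^L\frac{\gamma'(u)}{\gamma(u)-\gamma(t)}\,g(u)\,du,
\]
and it suffices to prove that $\tilde T$ is bounded on $H^s(\mathbb T_L)$.

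For the endpoint $s=0$, that is on $L^2(\mathbb T_L)$, I would simply invoke the Coifman--McIntosh--Meyer/David theorem: a chord-arc curve is Ahlfors-regular, so $\tilde T$ is bounded on $L^2$, with operator norm controlled by the chord-arc constant. For the endpoint $s=1$, where $H^1(\mathbb T_L)=\{g:g'\in L^2\}$, the heart of the matter is the commutation identity
\[
\frac{d}{dt}\,\tilde T g(t)=\gamma'(t)\,\tilde T\!\bigl(\overline{\gamma'}\,g'\bigr)(t),
\]
which I would obtain by differentiating the kernel in $t$, using $\partial_t(\gamma(u)-\gamma(t))^{-1}=-\tfrac{\gamma'(t)}{\gamma'(u)}\,\partial_u(\gamma(u)-\gamma(t))^{-1}$, and then integrating by parts in $u$ to move the derivative onto $g$; here $1/\gamma'=\overline{\gamma'}$ because $|\gamma'|\equiv1$. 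Since multiplication by the unimodular factors $\gamma'$ and $\overline{\gamma'}$ is an isometry of $L^2$, this identity gives $\|(\tilde Tg)'\|_{L^2}=\|\tilde T(\overline{\gamma'}g')\|_{L^2}\lesssim\|g'\|_{L^2}$ by the $L^2$-bound of the previous step; combined with the $L^2$-bound on $\tilde Tg$ itself, this yields boundedness on $H^1$.

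It then remains to interpolate. Since $[L^2(\mathbb T_L),H^1(\mathbb T_L)]_s=H^s(\mathbb T_L)$ (equivalently the real interpolation space $(L^2,H^1)_{s,2}$), the operator $\tilde T$, being bounded at both endpoints, is bounded on $H^s(\mathbb T_L)$ for every $0<s<1$; transporting back through the bi-Lipschitz map $\gamma$ gives the boundedness of $T$ on $H^s(\Gamma)$, which is exactly the Calder\'on--Plemelj property in setting (a).

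The main obstacle I anticipate is the rigorous justification of the commutation identity for a genuinely rough chord-arc curve, where $\gamma$ is merely Lipschitz and $\gamma''$ need not exist. A naive integration by parts on the truncated integral produces endpoint contributions at $u=t\pm\varepsilon$ that each blow up like $1/\varepsilon$, and one must show that, together with the divergence of the order-two kernel integral, these boundary terms cancel in the principal-value limit. I would control this either by first establishing the identity (and hence the bound, whose constant depends only on the $L^2$-operator norm of $\tilde T$) for smooth curves and smooth data and then passing to the limit through an approximation of $\Gamma$ by smooth curves with uniformly bounded chord-arc constants, or by formulating the identity weakly and verifying it against test functions by Fubini. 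Either route keeps the quantitative constant tied only to the chord-arc constant, so the final bounds are uniform and the interpolation argument goes through unchanged.
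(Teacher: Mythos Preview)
Your overall plan---prove boundedness at $s=0$ and $s=1$, then interpolate via $H^s(\mathbb T)=[L^2(\mathbb T),H^1(\mathbb T)]_s$---is exactly the paper's strategy, and your reduction to the circle via the bi-Lipschitz arc-length parametrization matches as well. The difference lies in the $s=1$ endpoint. You aim to establish the commutation identity $(\tilde T g)'=\gamma'\,\tilde T(\overline{\gamma'}g')$ by differentiating the kernel and integrating by parts in the principal-value integral, and you correctly flag the delicate point: the boundary terms at $u=t\pm\varepsilon$ each diverge like $1/\varepsilon$, and making the cancellation rigorous for a merely Lipschitz $\gamma$ is genuine work. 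The paper sidesteps this entirely. Instead of differentiating $Tf$, it applies David's $L^2$ theorem to $f'$, writing $f'=\varphi_i+\varphi_e$ with $\varphi_{i,e}$ in the Hardy spaces $E^2(\Omega_{i,e})$, and then \emph{integrates}: an anti-derivative $\Phi_{i,e}$ of $\varphi_{i,e}$ is shown, via a Hardy--Littlewood theorem (if $G'\in E^1(\mathbb D)$ then $G$ extends continuously to $\overline{\mathbb D}$ and is absolutely continuous on $\mathbb T$ with $(G|_{\mathbb T})'$ equal to the boundary trace of $G'$), to have boundary values with $\Phi_{i,e}'=\varphi_{i,e}$ on $\Gamma$, whence $f=\Phi_i+\Phi_e$ and $\|\Phi_{i,e}\|_{H^1(\Gamma)}\le C\|f\|_{H^1(\Gamma)}$. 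This anti-derivative route replaces your principal-value differentiation by classical Hardy-space facts, so no approximation or weak-formulation argument is needed; conversely, your route, once the commutation identity is justified, is more self-contained and yields the same operator-norm equality $\|T\|_{H^1\to H^1}=\|T\|_{L^2\to L^2}$.
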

\noindent In Section $5$ we study the intersection between the two results. In other words,  we discuss for which chord-arc curves the two theorems have the same conclusion. We have the following result. See Theorem \ref{interpol} for a more explicit and precise statement. 

\begin{theo}
Let $\Omega$ be a  domain bounded by a chord-arc curve $\Gamma$ and $\varphi$ its Riemann mapping. If $\Gamma$ is such that the Muckenhoupt weight $|\varphi'|$ has $A_2$ on $\mathbb T$ then, for $0 < s < 1$, $\mathcal{H}^s(\Gamma) = H^s(\Gamma)$.
\end{theo}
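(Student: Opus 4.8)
The natural first step is to transport both norms to the unit disk via the Riemann map $\varphi:\mathbb D\to\Omega_i$, which extends to a homeomorphism of the closures by Carath\'eodory's theorem. Since harmonicity is conformally invariant in the plane, if $u$ is the harmonic extension of $f$ to $\Omega_i$ then $v:=u\circ\varphi$ is the harmonic extension to $\mathbb D$ of $g:=f\circ\varphi$, and a Wirtinger computation gives $|\nabla v(z)|=|\nabla u(\varphi(z))|\,|\varphi'(z)|$. Combining this with the Koebe estimate $d(\varphi(z),\Gamma)\asymp|\varphi'(z)|(1-|z|)$ and the change of variables $dA(\varphi(z))=|\varphi'(z)|^2\,dA(z)$, the interior part of the $\mathcal H^s$ norm becomes
$$\|u\|_{\mathcal H^s(\Omega_i)}^2\asymp\iint_{\mathbb D}|\nabla v(z)|^2\,|\varphi'(z)|^{1-2s}(1-|z|)^{1-2s}\,dA(z),$$
a weighted Dirichlet integral with weight $|\varphi'|^{1-2s}$. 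In parallel, writing boundary points as $\varphi(\zeta),\varphi(\eta)$, using $d\sigma=|\varphi'|\,|d\zeta|$, and invoking the chord-arc property in the form $|\varphi(\zeta)-\varphi(\eta)|\asymp\int_{I(\zeta,\eta)}|\varphi'|$ over the shorter arc, the Douglas norm $\|f\|_{H^s(\Gamma)}^2$ turns into an explicit weighted Besov-type double integral on $\mathbb T\times\mathbb T$ carrying the weight $\omega:=|\varphi'|$. The exterior component $\Omega_e$ is treated by the same computation with its own Riemann map, so the statement reduces to a comparison of two weighted quadratic forms attached to the single weight $\omega$.

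The value $s=1/2$ is the anchor: there $|\varphi'|^{1-2s}\equiv1$, so $\|u\|_{\mathcal H^{1/2}(\Omega_i)}^2\asymp\iint_{\mathbb D}|\nabla v|^2\,dA$ is the conformally invariant Dirichlet integral, and the equality $\mathcal H^{1/2}(\Gamma)=H^{1/2}(\Gamma)$ for chord-arc $\Gamma$ is exactly \cite{WZ}. For general $s$ I would regard $\{\mathcal H^s(\Gamma)\}$ and $\{H^s(\Gamma)\}$ as two Hilbert scales that are complex interpolation scales in $s$: the first because complex interpolation of the weighted spaces $L^2((1-|z|)^{1-2s}dA)$ yields $(1-|z|)^{1-2s_\theta}$, the second because the fractional Sobolev/Besov scale interpolates in the standard way. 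The goal is then to show that the identity map is an isomorphism between the two scales across the whole interval.

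The engine is a weighted Littlewood--Paley equivalence: for the $A_2$ weight $\omega=|\varphi'|$ one must show that the boundary weighted Besov form on $\mathbb T$ is comparable, in both directions, to the interior weighted area integral on $\mathbb D$. Here the $A_2$ hypothesis does the decisive work, for it is precisely the condition making the weighted square function bounded above and below on $L^2(\omega)$, so that the passage between boundary traces and harmonic extension is a genuine norm equivalence and not merely a one-sided bound --- this is what a general chord-arc (hence only $A_\infty$) weight cannot guarantee, and what forces the extra hypothesis. With the equivalence available at the anchor $s=1/2$ and at a second value supplied by the $A_2$ square-function estimate, complex interpolation of the two scales propagates the norm comparison to every $s\in(0,1)$, which is the precise content of Theorem \ref{interpol}. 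Alternatively, one may first split each space into its holomorphic pieces using the Plemelj decompositions available for chord-arc curves (the Calder\'on--Plemelj results established above for $\mathcal H^s$ and for $H^s$), reducing the problem to the comparison of area and boundary norms for holomorphic Hardy--Sobolev functions, where the $A_2$ weighted theory is cleanest.

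The main obstacle is this weighted square-function equivalence carried out with the correct, $s$-dependent weight $|\varphi'|^{1-2s}$: one must verify that the $A_2$ property of $\omega$ controls the area integral against the boundary form uniformly, and in doing so reconcile the modulus $|\varphi'(z)|$ of the holomorphic derivative (an outer-type, $\exp$-of-harmonic extension of $\omega$) with the harmonic extension of $\omega$ that naturally appears in square-function estimates. A secondary but genuine point is to justify the chord-arc replacement $|\varphi(\zeta)-\varphi(\eta)|\asymp\int_{I(\zeta,\eta)}\omega$ and to check that the $A_2$ condition needed for the exterior map follows from the stated hypothesis on $\varphi$ together with the chord-arc symmetry, so that the interior and exterior components of $\mathcal H^s(\Gamma)$ are handled on equal footing.
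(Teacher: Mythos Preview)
Your outline identifies the right conformal transport and the right general mechanism (interpolation between two endpoint identifications), but it leaves open precisely the step you yourself flag as ``the main obstacle'': the weighted square-function equivalence between the area integral with weight $|\varphi'|^{1-2s}(1-|z|)^{1-2s}$ and the weighted boundary Besov form. You never actually establish it, and your interpolation argument for the $\mathcal H^s$ scale rests on interpolating the ambient weighted $L^2$ spaces, which does not by itself interpolate the \emph{subspace} of harmonic (or holomorphic) gradients --- one needs a bounded projection or retract onto that subspace, which is exactly where the real work hides. Your choice of anchors is also incomplete: $s=1/2$ is clean, but the ``second value supplied by the $A_2$ square-function estimate'' is never specified, so you have only one endpoint to interpolate from.

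The paper avoids the weighted square-function route entirely. It first passes to holomorphic functions by showing (via $|\varphi'|\in A_2$ and the weighted Hilbert transform) that the conjugate operator is bounded on $H^s(\Gamma)$; this is where $A_2$ actually enters. The decisive trick is then the operator $V_s(f)(z)=\int_0^z (f\circ\varphi)'(u)\,\varphi'(u)^{1/2-s}\,du$: the extra factor $\varphi'(u)^{1/2-s}$ absorbs your troublesome weight $|\varphi'|^{1-2s}$, so that $f\in\mathcal A^s(\Omega)$ becomes $V_s(f)'\in A^2_{1-2s}$, the \emph{standard} Bergman space on $\mathbb D$. This reduces the interpolation of $\mathcal A^s(\Omega)$ to the known interpolation $[A^2_1,A^2_{-1}]_s=A^2_{1-2s}$ of Bergman spaces (Zhao--Zhu), thereby supplying the missing projection. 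The endpoints used are $s=0$ and $s=1$, not $s=1/2$: at $s=0$ one proves $\mathcal A^0(\Omega)=E^2(\Omega)$ via the Gundy--Wheeden $A_\infty$ equivalence of $\mathsf g$-function and nontangential maximal function (chord-arc gives $A_\infty$), and hence $\mathcal H^0(\Omega\!\to\!\Gamma)=L^2(\Gamma)$ once the conjugate is bounded; at $s=1$ the identification $\mathcal H^1(\Omega\!\to\!\Gamma)=H^1(\Gamma)$ is essentially definitional. Interpolating the operator $\Pi:V_s(f)'\mapsto \mathrm{Re}\,f$ between $A^2_1\to L^2(\Gamma)$ and $A^2_{-1}\to H^1(\Gamma)$ then gives both inclusions for all $0<s<1$.
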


\noindent \textbf{Conventions:} Throughout the paper, we  deal with the bounded Jordan curve $\Gamma$ in the complex plane, and  denote the interior and exterior domains of $\Gamma$ by $\Omega_i$ and $\Omega_e$ (and that of the unit circle $\mathbb T$ by $\mathbb D_i$ and $\mathbb D_e$), respectively. $\Omega$  stands for $\Omega_i$ or $\Omega_e$ (and $\mathbb D$  stands for $\mathbb D_i$ or $\mathbb D_e$).

\section{The Dirichlet space on quasidisks}
Before we consider the general fractional Sobolev spaces let us focus on the special case $s=1/2$ which is by many means remarkable. Recall that if $\Omega$ is a domain bounded by a Jordan curve $\Gamma$ the Dirichlet space $\mathcal{H}^{1/2}(\Omega)$ is the set of harmonic functions $u:\Omega\to \mathbb C$ such that the $L^2(\Omega)$-norm of the gradient vector $\nabla u(x, y) = (u_x, u_y)$ finite: 
$$ \iint_\Omega |\nabla u(x,y)|^2dxdy<\infty.$$
The complex notation is more convenient for our purpose. Noting that $u_{\bar z} = (u_x + iu_y)/2$ and $u_z = (u_x - i u_y)/2$ where $z = x+iy$ we have that $|\nabla u(x, y)|^2 = 2 (|u_z|^2 + |u_{\bar z}|^2)$. 
The Dirichlet space is a Hilbert space of functions modulo constants which is conformally invariant: if $\varphi$ is a conformal map from  $\mathbb D_i$ onto $\Omega_i$
(or $\psi$ from $\mathbb D_e$ onto $\Omega_e$)   then the mapping $u\mapsto u\circ \varphi$ (or $u\mapsto u\circ \psi$) is an isometry between $\mathcal{H}^{1/2}(\Omega_{i})$ and $\mathcal{H}^{1/2}(\mathbb D_{i})$ (or between $\mathcal{H}^{1/2}(\Omega_{e})$ and $\mathcal{H}^{1/2}(\mathbb D_{e})$), which is isomorphic  to $H^{1/2}(\mathbb T)$.

Recall that a quasidisk is the image of the unit disk $\mathbb D_i$ by a quasiconformal homeomorphism of the plane, and its boundary curve is called a quasicircle. An homeomorphism $\Phi$ of the plane is called quasiconformal if its gradient in the sense of distribution is in $L^2_{loc}(\mathbb C)$ and if in addition there exists a constant $k<1$ such that
$$ \frac{\partial\Phi }{\partial \bar z} = \mu(z) \frac{\partial\Phi }{\partial z}$$
for an essentially uniformly bounded function $\mu \in L^{\infty}(\mathbb C)$ bounded by $k$. 
If $\varphi$ and $\psi$ are the conformal isomorphisms from $\mathbb D_i$ onto  $\Omega_i$ and from $\mathbb D_e$ onto $\Omega_e$, respectively,  then $h=\psi^{-1}\circ\varphi$ is an homeomorphism of $\mathbb T$ which is quasisymmetric in the sense that there exists a constant $C > 0$ such that 
$$
C^{-1} \leq \frac{\vert h(e^{i(t+\alpha)})-h(e^{it})\vert}{\vert h(e^{it})-h(e^{i(t-\alpha)})\vert}\leq C
$$
for every $t\in \mathbb R$ and $-\pi/2<\alpha\leq\pi/2$.  Moreover, every quasisymmetry arises in this way from some quasidisk. For these results on quasiconformal theory, we refer to \cite{Ahl} for details. The quasisymmetries of $\mathbb T$ form a group, and this group coincides with the group of homeomorphisms $h$ of $\mathbb T$ such that $f\mapsto f\circ h$ is an isomorphism of $H^{1/2}(\mathbb T)$ (\cite{NaS}). 

Noting that the quasisymmetry $h$ of $\mathbb T$ may be singular, we see  harmonic measures on $\Omega_i$ and $\Omega_e$ might be incomparable, that possibly causes  the distinction between boundary values obtained from $\mathcal H^{1/2}(\Omega_i)$ and $\mathcal{H}^{1/2}(\Omega_e)$. In \cite{Sc1}, Schippers-Staubach showed that is not the case. 
We outline their solution for the sake of completeness. On the one hand, any function $u \in \mathcal H^{1/2}(\mathbb D_{i})$ has radial limits for all  $e^{i\theta} \in \mathbb T$ except on a Borel set of logarithmic capacity zero. It follows from the conformal invariance of Dirichlet spaces that any $u \in \mathcal H^{1/2}(\Omega_{i})$ has radial limits on $\Gamma$ except on a Borel set, which is the image of a subset of $\mathbb T$ of logarithmic capacity zero under the conformal map $\varphi$, and the same assertion holds for the function in  $\mathcal H^{1/2}(\Omega_e)$. On the other hand, the quasisymmetry takes Borel sets of logarithmic capacity zero to Borel sets of logarithmic capacity zero. Notice that a set of logarithmic capacity zero on $\mathbb T$ has harmonic measure zero with respect to both $\mathbb D_i$ and $\mathbb D_e$. From this discussion it follows that if $\Gamma$ is a quasicircle then for any $u_i \in \mathcal H^{1/2}(\Omega_i)$, the unique harmonic extension $u_e$ in $\mathcal H^{1/2}(\Omega_e)$ of boundary function of $u_i$ has the same boundary values as $u_i$, except on a set whose harmonic measure is zero with respect to both 
 $\Omega_i$ and $\Omega_e$. Moreover, using the isomorphism of the operator $f\mapsto f\circ h$ of $H^{1/2}(\mathbb T)$, it can be shown that the transmission $u_i\mapsto u_e$ from $\mathcal H^{1/2}(\Omega_i)$ onto  $\mathcal H^{1/2}(\Omega_e)$  is a bounded isomorphism.

We can now address the Plemelj-Calder\'on problem for $\mathcal{H}^{1/2}(\Gamma)$. The argument is broken into two steps. In the first we consider the function in the space $D(\Gamma)$, the restriction space to $\Gamma$ of $C_c^{\infty}(\mathbb C)$. In the second the general case is done using an approximate process.

For any test function $f$, it holds that 
\begin{equation}\label{convolution}
   f(z) = -\frac{1}{\pi}\iint_{\mathbb C}\frac{\bar\partial f(\zeta)}{\zeta - z}d\xi d\eta, \qquad z \in \mathbb C.
\end{equation}
Indeed, 
\begin{equation*}\label{Dirac}
    \frac{1}{\pi}\iint_{\mathbb C}\frac{\bar\partial f(\zeta)}{z - \zeta}d\xi d\eta = \bar\partial f(z) \ast \left(\frac{1}{\pi z}\right) = \bar\partial \left(\frac{1}{\pi z}\right)\ast f(z) = \delta_0\ast f(z) = f(z)
\end{equation*}
where $\ast$ stands for convolution, and $\delta_0$ denotes Dirac function.  
Define
\begin{equation}\label{Fi1}
    F_i(z) = -\frac{1}{\pi}\iint_{\Omega_e}\frac{\bar\partial f(\zeta)}{\zeta - z}d\xi d\eta, \qquad z \in \Omega_i,
\end{equation}
\begin{equation}\label{Fe1}
    F_e(z) = \frac{1}{\pi}\iint_{\Omega_i}\frac{\bar\partial f(\zeta)}{\zeta - z}d\xi d\eta, \qquad z \in \Omega_e.
\end{equation}
It is easily seen that $F_i$ and $F_e$ are holomorphic functions in $\Omega_i$ and $\Omega_e$, respectively with $F_e(z) = O(\frac{1}{|z|})$ at $\infty$. Furthermore, it can be shown that 
both $F_i$ and $F_e$ define continuous functions at each $z \in \mathbb C$. 
It follows from \eqref{convolution} that on $\Gamma$,  
\begin{equation}\label{split}
    f  = F_i|_{\Gamma} - F_e|_{\Gamma}. 
\end{equation}
Here, we have used that the quasicircle $\Gamma$ has zero area with respect to the two-dimensional Lebesgue measure. 

Let $u_i$ as before be the harmonic extension of $f$ to $\Omega_i$. Noting that in $\Omega_i$ the infinitely differentiable  function $f - u_i$ vanishes on the boundary $\Gamma$, by the formula of integration by parts we have  
\begin{equation}\label{W11}
    \iint_{\Omega_i}\frac{\bar\partial(f - u_i)(\zeta)}{\zeta - z} d\xi d\eta = (-1)\iint_{\Omega_i}(f - u_i)(\zeta)\bar\partial \Big(\frac{1}{\zeta - z}\Big)d\xi d\eta = 0,
\end{equation}
and then 
\begin{equation}\label{Fe2}
   F_e(z) = 
     \frac{1}{\pi}\iint_{\Omega_i}\frac{\bar\partial u_i(\zeta)}{\zeta - z}d\xi d\eta
\end{equation}
for $z \in \Omega_e$, 
which implies that $F_e$  does indeed depend only on $f|_{\Gamma}$ and not on the specific extension. Similarly, we have
\begin{equation}\label{Fi2}
     F_i(z) = -\frac{1}{\pi}\iint_{\Omega_e}\frac{\bar\partial u_e(\zeta)}{\zeta - z}d\xi d\eta
\end{equation}
for $z \in \Omega_i$, where $u_e$ is as before the harmonic extension of $f$ to $\Omega_e$. From this expression, it follows that 
$$
F_i'(z) = -\frac{1}{\pi}\iint_{\Omega_e}\frac{\bar\partial u_e(\zeta)}{(\zeta - z)^2}d\xi d\eta 
= B(\bar\partial u_e \chi_{\Omega_e})(z).
$$
Here, $\chi_{\Omega_e}$ is the characteristic function of the domain $\Omega_e$ and $B$ denotes the Beurling operator, i.e., the convolution with $-\frac 1\pi \text{P.V.}\frac{1}{z^2}$ which is an isometry of $L^2(\mathbb C)$. Then, 
\begin{align*}
   \Vert F_i \Vert_{\mathcal{H}^{1/2}(\Omega_i)} = \Vert F_i' \Vert_{L^2(\Omega_i)} &= \Vert B(\bar\partial u_e \chi_{\Omega_e}) \Vert_{L^2(\Omega_i)}\\
    &\leq  \Vert B(\bar\partial u_e \chi_{\Omega_e}) \Vert_{L^2(\mathbb C)} = \Vert \bar\partial u_e \chi_{\Omega_e}  \Vert_{L^2(\mathbb C)}\\
    & = \Vert \bar\partial u_e \Vert_{L^2(\Omega_e)} \leq \Vert u_e \Vert_{\mathcal{H}^{1/2}(\Omega_e)} \approx \|f\|_{\mathcal H^{1/2}(\Gamma)}. 
\end{align*}
Here and below, the notation $A\approx B$ denotes that $A$ and $B$ are comparable. On the last step we have used that the operator $\mathcal H^{1/2}(\Omega_i)\to \mathcal H^{1/2}(\Omega_e)$, sending $u_i$ to $u_e$, is a bounded isomorphism with respect to $\|\cdot\|_{\mathcal H^{1/2}(\Omega_i)}$ and $\|\cdot\|_{\mathcal H^{1/2}(\Omega_e)}$ if and only if the curve $\Gamma$ is a quasicircle (see \cite{WZ}). 
Similarly, it follows from \eqref{Fe2} that $\Vert F_e \Vert_{\mathcal H^{1/2}(\Omega_e)} \leq \Vert u_i \Vert_{\mathcal H^{1/2}(\Omega_i)} \approx \|f\|_{\mathcal H^{1/2}(\Gamma)}$.

Let now $f$ be any function of $\mathcal H^{1/2}(\Gamma)$. Define the functions $F_i$ and $F_e$ as in \eqref{Fi2} and \eqref{Fe2}. 
It can be shown that these two functions are still  well-defined in this general case 
and holomorphic in the domains of definition. By the same reasoning as above, we  have
 $\Vert F_i \Vert_{\mathcal H^{1/2}(\Omega_i)} \leq \Vert u_e \Vert_{\mathcal H^{1/2}(\Omega_e)} \approx \|f\|_{\mathcal H^{1/2}(\Gamma)}$ and 
 $\Vert F_e \Vert_{\mathcal H^{1/2}(\Omega_e)} \leq \Vert u_i \Vert_{\mathcal H^{1/2}(\Omega_i)} \approx \|f\|_{\mathcal H^{1/2}(\Gamma)}$. Recall that $D(\Gamma)$ is a dense subset of $\mathcal H^{1/2}(\Gamma)$. Using \eqref{split} it is not hard to show $f = F_i|_{\Gamma} - F_e|_{\Gamma}$ by an approximation process. The details are left to the reader.

\section{Fractional Sobolev spaces on quasidisks}
 Let $\Gamma $ be a quasicircle and $\Omega_{i,e}$ its complementary domains as above. In Section $1$ we have defined $\mathcal{H}^s(\Gamma)$: for $s>1/2$ it is the set of $(s-1/2)$-H\"older functions whose harmonic extension to $\Omega_{i,e}$ belongs to $\mathcal{H}^s(\Omega_{i,e})$ assigned the norm $\|\cdot\|_{\mathcal H^s(\Gamma)}$  and for $0<s<1/2$ (also $s = 1/2$), the completion of $D(\Gamma)$ under the norm $\|\cdot\|_{\mathcal H^s(\Gamma)}$, the square of the norm being for both cases
$$
\iint_{\Omega_i}|\nabla u_i(z)|^2 d(z,\Gamma)^{1-2s} dxdy+\iint_{\Omega_e}|\nabla u_e(z)|^2d(z,\Gamma)^{1-2s}dxdy.
$$

By the inclusion relation $\mathcal H^s(\Gamma) \subset \mathcal H^{1/2}(\Gamma)$ for $1/2 < s <1$, we conclude by the preceding section that  any $f \in \mathcal H^s(\Gamma)$  can be written as $f = F_i|_{\Gamma} - F_e|_{\Gamma}$ with $F_i, F_e$ (see \eqref{Fe2},\eqref{Fi2}) 
being holomorphic in $\Omega_{i,e}$ and $F_{i,e} \in \mathcal H^{1/2}(\Omega_{i,e})$.  
 We wish to find conditions on $\Gamma$ that would imply that $F_{i,e}\in \mathcal{H}^s(\Omega_{i,e})$ whenever $f \in \mathcal H^s(\Gamma)$, $0<s<1$. 
 
 For this purpose and  later use,  we need to introduce weights of Muckenhoupt $A_p$ on $\mathbb C$. A 
 locally integrable nonnegative functions $\omega:\mathbb C\to \mathbb R$ 
 is said to satisfy the $A_1$ condition on $\mathbb C$ if there exists $C>0$ such that for any disk $D$ of the plane,   
$$\frac{1}{|D|}\iint_D\omega(z)dxdy\leq C\omega(z)$$ 
for almost all $z\in D$; while $\omega$ is said to satisfy the $A_{\infty}$ condition if
$$
\frac{1}{|D|}\iint_D\omega(z)dxdy\leq C\exp \left(\frac{1}{|D|}\iint_D\log\omega(z)dxdy\right). 
$$ 
For any $p>1$, the $A_p$ weight  is the set of locally integrable nonnegative functions $\omega:\mathbb C\to \mathbb R$ such that there exists $C>1$ such that for every disk $D$ of the plane,
$$ \frac{1}{|D|}\iint_D\omega(z)dxdy\left(\frac{1}{|D|}\iint_Dw(z)^{-\frac{1}{p-1}}dxdy\right)^{p-1}\leq C.$$
The $A_p$ weight on $\mathbb T$ can be defined similarly. 
 It is easy to see that $A_1\subset A_p \subset A_{\infty}$ for all $p>1$, and $A_{p_1}\subset A_{p_2}$ for $1 < p_1 < p_2$. We can then assume $p > 2$ whenever the weight $\omega$ has $A_p$ for $p \geq 1$, and by H\"older inequality the left part of the above inequality is always $\ge 1$: one can thus interpret this class as verifying some kind of reverse H\"older inequality. If the weight $\omega$ has $A_p$ then any Calderon-Zygmund operator is bounded on the weighted space $L^p(\mathbb C, \omega(z)dxdy)$ (\cite{CoF}). 
 Noting that $F_i'(z) = B(\bar\partial u_e \chi_{\Omega_e})(z)$,  $F_e'(z) = -B(\bar\partial u_i \chi_{\Omega_i})(z)$ and the Beurling operator $B$  is a Calder\'on-Zygmund operator, we have that a sufficient condition on $\Gamma$ that implies $F_{i,e}\in \mathcal{H}^s(\Omega_{i,e})$  is that the weight $\omega(z)=d(z,\Gamma)^{1-2s}$ satisfies the $A_2$ condition. 
Precisely, under this condition we have
\begin{align*}
    &\|F_i\|_{\mathcal H^s(\Omega_i)} \leq C \|u_e\|_{\mathcal H^s(\Omega_e)} \leq C \|f\|_{\mathcal H^s(\Gamma)};\\
    &\|F_e\|_{\mathcal H^s(\Omega_e)} \leq C \|u_i\|_{\mathcal H^s(\Omega_i)} \leq C \|f\|_{\mathcal H^s(\Gamma)},
\end{align*}
where $C$ is an absolute constant.
 
The dependence of the condition that $d(z,\Gamma)^{1-2s}$ having $A_2$ on the geometry of the curve has been studied by Astala \cite{Ast}. In order to understand his results we first need to introduce some notions.

For a compact set $E\subset \mathbb C$ and $0<\delta\le 2$ let
$$ M_\delta(E,t)=\frac{|E+B(0,t)|}{t^{2-\delta}}.$$
We then define a kind of Minkowski content by
$$ h_\delta(E)=\sup\limits_{0<t\leq \mathrm{diam}(E)}{M_\delta(E,t)}.$$
\begin{df} We say that a Jordan curve $\Gamma$ is $\delta$-regular if there exists $C>0$ such that for every disk $D(z,R)\subset \mathbb C$,
$$h_\delta(\Gamma\cap D(z,R))\leq CR^\delta.$$
\end{df}
\noindent It is known (\cite{Ast}) that $1$-regularity is equivalent to Ahlfors-David regularity.

Astala \cite{Ast} has proven that for any quasicircle $\Gamma$ there exists $\delta<2$ such that $\Gamma$ is $\delta$-regular. We may thus have, for quasicrcles,
$$h(\Gamma)=\inf\{\delta: \Gamma\, \mathrm{is}\, \delta\text{-}\mathrm{regular}\}<2.$$
 
\begin{df} A compact subset $E$ of the complex plane is said to be porous if there exists $c\in (0,1)$ and $r_0 > 1$ such that for every $z\in \mathbb C$ and $0 < r \leq r_0$, the disk $D(z,r)$ contains a disk of radius $cr$ which does not intersect $E$.
\end{df}
 
\begin{theo}[\cite{Ast}]\label{As1} 
Let $\alpha\in (0,1)$ and $\Gamma$ be a porous Jordan curve. We  have
\begin{equation}
    d(z,\Gamma)^{\alpha-1}\in A_2 \Leftrightarrow d(z,\Gamma)^{\alpha-1}\in A_1 \Leftrightarrow\alpha >h(\Gamma)-1.
\end{equation} 
\end{theo}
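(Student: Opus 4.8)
The plan is to prove the cycle of implications
$$\alpha>h(\Gamma)-1 \;\Longrightarrow\; \omega\in A_1 \;\Longrightarrow\; \omega\in A_2 \;\Longrightarrow\; \alpha>h(\Gamma)-1,$$
where $\omega(z)=d(z,\Gamma)^{\alpha-1}$; the middle implication is the general inclusion $A_1\subset A_2$ already recorded above, so only the two outer implications require work. Throughout I would first reduce the verification of any $A_p$ bound to disks $D=D(z_0,R)$ centered at $z_0\in\Gamma$: when $d(z_0,\Gamma)\ge 2R$ the weight is comparable on $D$ and the condition is trivial, while when $d(z_0,\Gamma)<2R$ one replaces $z_0$ by a nearest point of $\Gamma$ and enlarges $R$ by a bounded factor. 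For such a centered disk one has $d(z,\Gamma)<R$ on $D$, and porosity forces $\sup_{z\in D}d(z,\Gamma)\gtrsim R$, so $\operatorname*{ess\,inf}_D\omega\approx R^{\alpha-1}$; hence the $A_1$ condition on $D$ is equivalent to the single estimate
\begin{equation*}
\iint_{D(z_0,R)} d(z,\Gamma)^{\alpha-1}\,dxdy \le C\,R^{\alpha+1}.
\end{equation*}

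For the implication $\alpha>h(\Gamma)-1\Rightarrow\omega\in A_1$ I would establish this estimate by a dyadic layer-cake decomposition adapted to the distance to $\Gamma$. Writing $A_k=\{z\in D:\,2^{-k-1}R<d(z,\Gamma)\le 2^{-k}R\}$ for $k\ge0$, each $A_k$ is contained in the $2^{-k}R$-neighbourhood of $E:=\Gamma\cap D(z_0,2R)$, so the definition of $M_\delta$ and the $\delta$-regularity bound $h_\delta(E)\le C(2R)^\delta$ give $|A_k|\le (2^{-k}R)^{2-\delta}h_\delta(E)\le C\,2^{-k(2-\delta)}R^2$. Summing,
\begin{equation*}
\iint_{D} d(z,\Gamma)^{\alpha-1}\,dxdy \;\lesssim\; \sum_{k\ge0}(2^{-k}R)^{\alpha-1}\,2^{-k(2-\delta)}R^2 \;=\; R^{\alpha+1}\sum_{k\ge0}2^{-k(\alpha+1-\delta)},
\end{equation*}
and the geometric series converges precisely when $\delta<\alpha+1$. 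Since $h(\Gamma)<\alpha+1$, one may pick a regularity exponent $\delta\in(h(\Gamma),\alpha+1)$ for which $\Gamma$ is $\delta$-regular, and the desired bound follows.

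For the converse $\omega\in A_2\Rightarrow\alpha>h(\Gamma)-1$ I would run the same decomposition backwards. First, porosity produces a disk $D(z',cR/2)\subset D$ on which $d(z,\Gamma)\ge cR/2$, whence $\iint_D d(z,\Gamma)^{1-\alpha}\,dxdy\gtrsim R^{3-\alpha}$; feeding this lower bound into the $A_2$ inequality $\big(\iint_D d^{\alpha-1}\big)\big(\iint_D d^{1-\alpha}\big)\le C R^4$ yields $\iint_D d(z,\Gamma)^{\alpha-1}\,dxdy\lesssim R^{\alpha+1}$. Since $d(z,\Gamma)^{\alpha-1}\approx(2^{-k}R)^{\alpha-1}$ on $A_k$, this forces $|A_k|\lesssim R^2\,2^{-k(1-\alpha)}$, and summing over $k\ge m$ gives $|\{z\in D:\,d(z,\Gamma)\le 2^{-m}R\}|\lesssim R^{\alpha+1}(2^{-m}R)^{1-\alpha}$, a bound of Minkowski-content type showing $\Gamma$ is $(\alpha+1)$-regular, hence $h(\Gamma)\le\alpha+1$.

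The genuine difficulty is upgrading this last inequality to the \emph{strict} inequality $\alpha>h(\Gamma)-1$ demanded by the statement, since the computation only delivers $\alpha\ge h(\Gamma)-1$. This is exactly where the self-improving (``openness'') property of the Muckenhoupt classes enters. Using that $A_2$ is symmetric under $\omega\mapsto\omega^{-1}$, the reverse Hölder inequality applied to both $\omega=d^{\alpha-1}$ and $\omega^{-1}=d^{1-\alpha}$ produces $q>1$ with $\langle\omega^{q}\rangle\langle\omega^{-q}\rangle\lesssim(\langle\omega\rangle\langle\omega^{-1}\rangle)^{q}$, so that $\omega^{q}=d^{\,q(\alpha-1)}\in A_2$; that is, $d^{\alpha'-1}\in A_2$ with $\alpha'=1-q(1-\alpha)<\alpha$ and $\alpha'\in(0,1)$ for $q$ close to $1$. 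Applying the content estimate just obtained to $\alpha'$ yields $(\alpha'+1)$-regularity, so $h(\Gamma)\le\alpha'+1<\alpha+1$, which is the strict inequality. I expect this self-improvement step, together with the careful comparison between the neighbourhood measures $|\{z\in D:\,d(z,\Gamma)\le t\}|$ and the content $M_\delta(\Gamma\cap D,t)$ appearing in the definition of $\delta$-regularity, to be the main technical obstacles; the two layer-cake estimates are routine once porosity and $\delta$-regularity are in hand.
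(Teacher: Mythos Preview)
The paper does not supply a proof of this theorem: it is quoted verbatim from Astala \cite{Ast} and then invoked as a black box in the proof of Theorem~\ref{As2}. There is therefore no in-paper argument to compare your proposal against.

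For what it is worth, your outline follows the expected route and is sound. The dyadic layer-cake over the sets $A_k=\{2^{-k-1}R<d(\cdot,\Gamma)\le 2^{-k}R\}$ combined with $\delta$-regularity for some $\delta\in(h(\Gamma),\alpha+1)$ is the natural way to obtain the $A_1$ estimate $\iint_D d(z,\Gamma)^{\alpha-1}\,dxdy\lesssim R^{\alpha+1}$, and porosity is used exactly where it should be, to identify $\operatorname*{ess\,inf}_D\omega$ with $R^{\alpha-1}$ and to produce the lower bound on the dual integral in the converse direction. Your handling of the strict inequality is also the right idea: the openness of $A_2$ (reverse H\"older applied to both $\omega$ and $\omega^{-1}$) promotes $d^{\alpha-1}\in A_2$ to $d^{\alpha'-1}\in A_2$ for some $\alpha'<\alpha$, and the non-strict bound then gives $h(\Gamma)\le\alpha'+1<\alpha+1$. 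The two ``technical obstacles'' you flag---passing between the Minkowski neighbourhood $(\Gamma\cap D)+B(0,t)$ and the sublevel set $\{z\in D:d(z,\Gamma)\le t\}$, and keeping $\alpha'\in(0,1)$ by taking $q$ close to $1$---are genuine bookkeeping points but not real difficulties.
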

Notice that every quasicircle is porous (see \cite{Va}). 
We may now state the principal theorem of this section:
\begin{theo} \label{As2} 
Let $0<s<1$ and  $\Gamma$ be a quasicircle. If the point $(h(\Gamma),s)$ locates in the shadow (see Figure \ref{domain}); that is, 
$$\frac{h(\Gamma)-1}{2}<s<\frac{3-h(\Gamma)}{2},$$  
then the following two statements hold:
\begin{enumerate}
\item[\rm(1)] any $f\in \mathcal{H}^s(\Gamma)$ can be written as $f=F_i|_{\Gamma}-F_e|_{\Gamma}$ with $F_i$ and $F_e$ being analytic in $\Omega_i$ and $\Omega_e$, respectively; 
\item[\rm(2)]  Moreover, $F_{i,e}\in \mathcal{H}^s(\Omega_{i,e})$ and  exists $C>0$ such that
$\|F_{i,e}\|_{\mathcal{H}^s(\Omega_{i,e})}\leq C \|f\|_{\mathcal{H}^s(\Gamma)}$.
\end{enumerate}
\end{theo}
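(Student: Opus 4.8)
The plan is to notice that nearly all the analytic work has already been discharged in the discussion preceding the theorem, which reduces everything to a single weighted-norm question. Once we know that the weight $\omega(z)=d(z,\Gamma)^{1-2s}$ belongs to the Muckenhoupt class $A_2$, the boundedness of the Beurling operator $B$ on $L^2(\mathbb C,\omega\,dxdy)$, combined with the identities $F_i'=B(\bar\partial u_e\chi_{\Omega_e})$ and $F_e'=-B(\bar\partial u_i\chi_{\Omega_i})$ and with $\|u_{i,e}\|_{\mathcal H^s(\Omega_{i,e})}\le\|f\|_{\mathcal H^s(\Gamma)}$, yields at once the estimate $\|F_{i,e}\|_{\mathcal H^s(\Omega_{i,e})}\le C\|f\|_{\mathcal H^s(\Gamma)}$ of part (2). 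Thus the whole theorem reduces to checking that the stated interval of $s$ is exactly the range in which $\omega\in A_2$, and then extracting the decomposition of part (1) from the $s=1/2$ case.

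First I would record that, $\Gamma$ being a quasicircle, it is porous, so Astala's Theorem \ref{As1} applies: for $\alpha\in(0,1)$ one has $d(z,\Gamma)^{\alpha-1}\in A_2$ if and only if $\alpha>h(\Gamma)-1$. I would then match exponents in the two regimes that are symmetric about $s=1/2$. For $s\in(1/2,1)$ I set $\alpha=2(1-s)\in(0,1)$, so that $\alpha-1=1-2s$, and Astala's theorem gives $\omega\in A_2\iff\alpha>h(\Gamma)-1\iff s<\frac{3-h(\Gamma)}{2}$. For $s\in(0,1/2)$ the exponent $1-2s$ is positive and lies outside the range covered directly by Theorem \ref{As1}; here I would invoke the self-duality of $A_2$, namely $\omega\in A_2\iff\omega^{-1}\in A_2$, and replace $\omega$ by $\omega^{-1}(z)=d(z,\Gamma)^{2s-1}$. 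Setting now $\alpha=2s\in(0,1)$ gives $\alpha-1=2s-1$, so Astala's theorem yields $\omega^{-1}\in A_2\iff\alpha>h(\Gamma)-1\iff s>\frac{h(\Gamma)-1}{2}$. The boundary case $s=1/2$ is trivial since $\omega\equiv1$, and $\frac{h(\Gamma)-1}{2}<\tfrac12<\frac{3-h(\Gamma)}{2}$ holds because $1\le h(\Gamma)<2$. Combining the three cases shows $\omega\in A_2$ precisely when $\frac{h(\Gamma)-1}{2}<s<\frac{3-h(\Gamma)}{2}$, which establishes part (2).

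For part (1) I would split again according to the position of $s$ relative to $1/2$. When $s\in(1/2,1)$ the inclusion $\mathcal H^s(\Gamma)\subset\mathcal H^{1/2}(\Gamma)$ lets me quote verbatim the decomposition $f=F_i|_\Gamma-F_e|_\Gamma$ already established in Section 2. When $s\in(0,1/2]$, the space $\mathcal H^s(\Gamma)$ is by definition the completion of $D(\Gamma)$, and for $f\in D(\Gamma)$ the identity \eqref{split} gives $f=F_i|_\Gamma-F_e|_\Gamma$ outright; since part (2) makes $f\mapsto F_{i,e}$ bounded from $\mathcal H^s(\Gamma)$ into $\mathcal H^s(\Omega_{i,e})$, I would pass to the limit along a $D(\Gamma)$-approximating sequence to recover the decomposition for an arbitrary $f\in\mathcal H^s(\Gamma)$.

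The main obstacle, conceptually, is the regime $s<1/2$: Astala's theorem only governs the negative exponents $\alpha-1\in(-1,0)$, so the positive-exponent weight $d(z,\Gamma)^{1-2s}$ is not covered directly, and the self-duality of $A_2$ is precisely the device that folds it back into Astala's range — this is what produces the lower endpoint $\frac{h(\Gamma)-1}{2}$ and, pleasingly, the symmetry of the whole interval about $s=1/2$. The only genuinely remaining technical care lies in the approximation step of part (1): I must ensure that convergence of the analytic pieces $F_{i,e}^{(n)}$ in $\mathcal H^s(\Omega_{i,e})$ transfers to convergence of their boundary traces in $\mathcal H^s(\Gamma)$, so that the identity $f_n=F_i^{(n)}|_\Gamma-F_e^{(n)}|_\Gamma$ survives in the limit; this is the same approximation process left to the reader at the end of Section 2.
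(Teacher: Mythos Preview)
Your proposal is correct and follows essentially the same route as the paper's own proof: reduce part (2) to the $A_2$ condition on $d(z,\Gamma)^{1-2s}$, handle $s>1/2$ directly via Theorem~\ref{As1} and $s<1/2$ via the self-duality $\omega\in A_2\Leftrightarrow\omega^{-1}\in A_2$, and obtain part (1) from the $\mathcal H^{1/2}$ case for $s\ge 1/2$ and by $D(\Gamma)$-approximation for $s<1/2$. The paper's argument is identical in structure and leaves the same approximation step to the reader.
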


\begin{proof}
Following the discussion above, it is clear that  statement $(2)$ holds if the weight $d(z,\Gamma)^{1-2s}\in A_2$. 
Assume first that $s\in(0,1/2)$: then, by the definition of $A_2$, $d(z,\Gamma)^{1-2s}\in A_2 \Leftrightarrow d(z,\Gamma)^{2s-1}\in A_2$, which is equivalent, by Theorem \ref{As1}, to the fact that $s>\frac{h(\Gamma)-1}{2}$. 
Assume now $s\in (1/2,1)$: then, again by Theorem \ref{As1}, $d(z,\Gamma)^{1-2s}\in A_2 \Leftrightarrow s<\frac{3-h(\Gamma)}{2}$. The case of $s = 1/2$ has been addressed in the last section.

Assume $\frac{1}{2} \leq s < \frac{3-h(\Gamma)}{2}$. Then  statement $(1)$ holds as we have already noticed. Assume now that $\frac{h(\Gamma) - 1}{2} < s < \frac{1}{2}$. By the preceding section statement $(1)$ holds whenever $f \in D(\Gamma) \subset \mathcal H^s(\Gamma)$.  Then, using an approximation process we can show that  statement $(1)$ holds for a general $f \in \mathcal H^s(\Gamma)$. 
\end{proof}

 \begin{figure}
    \centering
    \includegraphics[width=.6\textwidth]{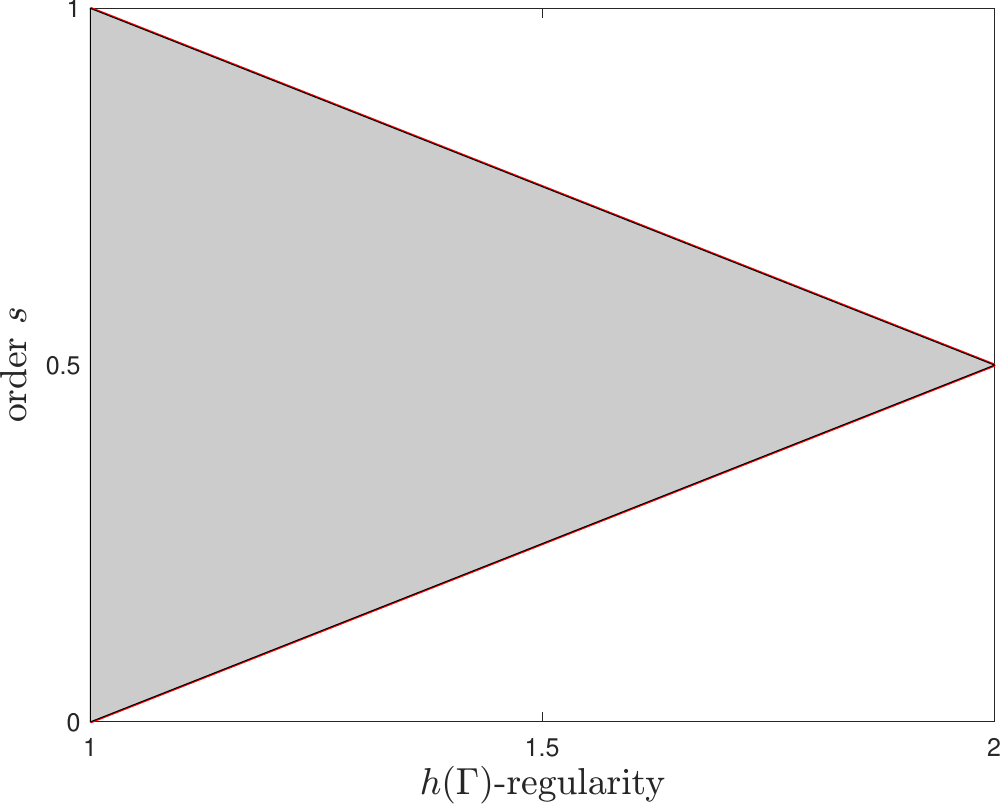}
    \caption{Domain formed by points $(h(\Gamma), s)$}
    \label{domain}
\end{figure}

We end up this section with a uniqueness assertion. 
Let $\Gamma$ be a quasicircle in $\mathbb C$. Liu-Shen \cite{LS} showed that the Calder\'on-Plemelj problem has a unique solution in $\mathcal H^{1/2}(\Gamma)$. By the inclusion relation 
$\mathcal H^s(\Gamma)\subset\mathcal H^{1/2}(\Gamma)$ for $1/2 < s <1$ it is easily seen that  the Calder\'on-Plemelj problem has a unique solution in $\mathcal H^{s}(\Gamma)$.

\section{Fractional Sobolev spaces on Chord-arc domains}
We start this section from the classical Sobolev space on $\mathbb R^n$ (see \cite{AS}). The Sobolev space $H^{\alpha}(\mathbb R^n)$ of non-negative integer order $\alpha$ is defined as the set of all functions $u \in L^2(\mathbb R^n)$ such that for every multi-index $\alpha$ with $|i| = \alpha$, the mixed partial derivative exists in the distributional sense and is in $L^2(\mathbb R^n)$, namely, 
$$
\|u\|_{H^{\alpha}(\mathbb R^n)}^2 = \sum_{|i| = \alpha}\int_{\mathbb R^n} |D_iu|^2 dx < \infty.
$$
The right summation  has an equivalent expression in terms of the Fourier transform as follows
$$
d_{\alpha}(u) =\int_{\mathbb R^n} |\xi|^{2\alpha}|\hat u(\xi)|^2d\xi;
$$
that can be used to define the Sobolev space $H^{\alpha}(\mathbb R^n)$ for any $\alpha \ge 0$. For the convenience of generalization to functions on a general open set of $\mathbb R^n$, we now introduce an expression for $d_{\alpha}(u)$ when $\alpha$ is a non-integer, which involves $u$ and its derivatives, but not the Fourier transform.

If $0 < \alpha <1$, it follows from Parseval's formula that 
$$
d_{\alpha}(u) = \frac{1}{C(n, \alpha)}\int_{\mathbb R^n}\int_{\mathbb R^n}\frac{|u(x)-u(y)|^2}{|x-y|^{n+2\alpha}}dxdy.
$$
Here, the constant 
$$
C(n, \alpha) = \frac{2^{-2\alpha+1}\pi^{\frac{n+2}{2}}}{\Gamma(\alpha + 1)\Gamma(\alpha + \frac{n}{2})\sin(\pi\alpha)}. 
$$
Then we define the Sobolev space of fractional order $\alpha \in (0, 1)$ as being the set of measurable functions $u \in L^2(\mathbb R^n)$ such that
$$
\|u\|_{H^{\alpha}(\mathbb R^n)}^2 = \int_{\mathbb R^n}\int_{\mathbb R^n}\frac{|u(x)-u(y)|^2}{|x-y|^{n+2\alpha}}dxdy < \infty.
$$
If $\alpha = k + s$ where $k$ is a positive integer and $0 < s < 1$, we then have 
$$
d_{\alpha}(u) = \frac{1}{C(n, s)}\sum_{|i| = k}\|D_iu\|^2_{H^s(\mathbb R^n)}.
$$
In this case we may define the Sobolev space $H^{\alpha}(\mathbb R^n)$ as the set of functions $u \in L^2(\mathbb R^n)$ such that
$$
\|u\|_{H^{\alpha}(\mathbb R^n)}^2 = \|u\|_{H^{k}(\mathbb R^n)}^2 + \sum_{|i| = k}\|D_iu\|^2_{H^s(\mathbb R^n)} < \infty.
$$

Let $\Omega$ be a chord-arc domain of the plane, namely, its boundary curve $\Gamma$ is an image of the unit circle by a bi-Lipschitz homeomorphism of the plane. Here, $\Gamma$ is called the chord-arc curve. Let us recall a geometric description of the chord-arc curve: there exists $K \geq 1$ such that for any pair of points $(z_1, z_2)$ of $\Gamma$,  
the length of the shorter arc $\Gamma(z_1, z_2)$ of the two sub-arcs of $\Gamma$ with endpoints $z_1$, $z_2$ satisfies the estimate
$$
\text{length}(\Gamma(z_1, z_2)) \leq K|z_1 - z_2|.
$$
($\Gamma$ is a quasicircle if the above estimate holds with length replaced by diameter.) 
Noting that the chord-arc curve $\Gamma$ is rectifiable, we may then assume, without loss of generality, that its length is $2\pi$, so that if $z:[0,2\pi)\to \Gamma$ stands for the arc-length parametrization of $\Gamma$, the mapping $\lambda: \mathbb T\to \Gamma,\; e^{it}\mapsto z(t)$ is another bi-Lipschitz map with $|\lambda'| = 1$, and moreover, it is easy to show that 
$$
\frac{1}{K}\leq\frac{|\lambda(e^{it_1}) - \lambda(e^{it_2})|}{|e^{it_1} - e^{it_2}|}\leq \frac{\pi}{2}
$$
for any $t_1, t_2 \in [0, 2\pi)$.

Motivated by the above definition of Sobolev spaces on $\mathbb R^n$, 
we may define the Sobolev space $H^{\alpha}(\Omega)$, $1/2 < \alpha < 3/2$, as a set 
of functions $u \in L^2(\Omega)$ such that $\|u\|_{H^{\alpha}(\Omega)} < \infty$. The quotient space of $H^{\alpha}(\Omega)$ by the constant functions is a Hilbert space  with a natural norm $\|\cdot\|_{H^{\alpha}(\Omega)}$. On the other hand, we define the fractional Sobolev trace space $H^s(\Gamma)$, $0<s<1$,  as the space of  functions $f \in L^2(\Gamma)$ such that
$$\|f\|_{H^s(\Gamma)}^2 = \iint_{\Gamma\times\Gamma}\frac{|f(z)-f(\zeta)|^2}{|z-\zeta|^{1+2s}}d\sigma(z) d\sigma(\zeta)<\infty.$$
This is  a Hilbert space modulo constants with a norm $\|\cdot\|_{H^s(\Gamma)}$.

Notice that the norms defining $H^s(\Omega)$ and $H^s(\Gamma)$ are bi-Lipschitz invariant. We can thus conclude that the spaces $H^s(\Omega)$ are isomorphic to $H^s(\mathbb D)$, as well as $H^s(\Gamma)$ to $H^s(\mathbb T)$, the latter isomorphism being induced by the bi-Lipschitz map $\lambda$ so that $\|f\|_{H^s(\Gamma)}^2$ is equivalent to 
$$
 \iint_{\mathbb T\times \mathbb T}\frac{|f\circ\lambda(\zeta_1) - f\circ\lambda(\zeta_2)|^2}{|\zeta_1-\zeta_2|^{1+2s}}|d\zeta_1||d\zeta_2|. 
$$
We define $H^0(\Gamma)$ as the space of functions $f \in L^2(\Gamma, d\sigma)$, and equivalently, $f\circ\lambda \in L^2(\mathbb T)$ and $H^1(\Gamma)$ as the set of functions $f \in L^2(\Gamma, d\sigma)$ such that $f\circ\lambda$ is an anti-derivative of a function in $L^2(\mathbb T)$.

It is a classical fact that $C_c^\infty(\mathbb R^2)$ is dense in $H^{\alpha}(\mathbb R^2)$. 
We may define the trace operator  $\gamma$ on $C_c^\infty(\mathbb R^2)$ by $\gamma(u)=u|_\Gamma$; that belongs to $H^s(\Gamma)$, $s = \alpha - 1/2$, the order having $1/2$-loss.    Moreover, 
\begin{theo}[Page 182 in \cite{JW1}]
    Let $\alpha \in (1/2, 3/2)$. The trace operator $\gamma$ can be extended to a continuous map from $H^{\alpha}(\mathbb R^2)$ onto $H^s(\Gamma)$. 
\end{theo}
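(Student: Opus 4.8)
The statement is the classical trace/extension theorem for a one-dimensional set, and the plan is to prove it in two independent halves: \emph{boundedness} of the restriction $\gamma$, and \emph{surjectivity} via a bounded right inverse (an extension operator). The geometric input that drives both halves is that a chord-arc curve is Ahlfors--David regular of dimension one: the chord-arc condition gives $\sigma(\Gamma\cap B(x,r))\approx r$ for $x\in\Gamma$ and $0<r\le\operatorname{diam}\Gamma$, so $(\Gamma,\sigma)$ is a $1$-set in the sense of Jonsson--Wallin. Under this regularity the Douglas-type space $H^s(\Gamma)$ coincides, modulo constants and using that $\Gamma$ is bounded, with the intrinsic Besov space $B^{2,2}_s(\Gamma)$ built from $\sigma$, while $H^{\alpha}(\mathbb R^2)=B^{2,2}_{\alpha}(\mathbb R^2)$; the arithmetic $s=\alpha-\tfrac12$ is exactly the $(n-d)/2=\tfrac12$ loss of smoothness predicted by the general theory when a function on $\mathbb R^2$ is restricted to a $1$-dimensional set.

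First I would prove boundedness. For $\alpha\in(1/2,3/2)$ the trace is genuinely pointwise: I would define $u(x)$ for $x\in\Gamma$ as the limit of the ball-averages $a_k(x)=\frac{1}{|B(x,2^{-k})|}\iint_{B(x,2^{-k})}u$, which converges by the Besov regularity of $u$. To estimate $\iint_{\Gamma\times\Gamma}|u(x)-u(y)|^2|x-y|^{-1-2s}\,d\sigma(x)\,d\sigma(y)$ I would, for fixed $x,y\in\Gamma$ with $|x-y|\approx 2^{-j}$, split $u(x)-u(y)=(u(x)-a_j(x))+(a_j(x)-a_j(y))+(a_j(y)-u(y))$ and telescope $u(x)-a_j(x)=\sum_{k\ge j}(a_{k+1}(x)-a_k(x))$, bounding each increment by the $L^2$-oscillation of $u$ over the relevant ball. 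Squaring, integrating in $x,y$, summing the dyadic scales, and using Ahlfors regularity to turn each $d\sigma$-integral over $\Gamma\cap B$ into a factor $\approx 2^{-j}$, collapses the sum into the square function characterizing $\|u\|_{H^{\alpha}(\mathbb R^2)}^2$. This yields $\|\gamma u\|_{H^s(\Gamma)}\le C\|u\|_{H^{\alpha}(\mathbb R^2)}$ first for $u\in C_c^\infty$ and hence, by density, on all of $H^{\alpha}(\mathbb R^2)$.

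For surjectivity I would construct a bounded linear extension operator $E\colon H^s(\Gamma)\to H^{\alpha}(\mathbb R^2)$ with $\gamma\circ E=\mathrm{id}$. Take a Whitney decomposition of $\mathbb R^2\setminus\Gamma$ into dyadic cubes $\{Q\}$ with $\ell(Q)\approx d(Q,\Gamma)$, a subordinate partition of unity $\{\psi_Q\}$, and for each $Q$ a nearest point $x_Q\in\Gamma$ together with the local $\sigma$-average $f_Q$ of $f$ over $\Gamma\cap B(x_Q,c\,\ell(Q))$; set $Ef=\sum_Q\psi_Q f_Q$. Then $Ef$ is smooth off $\Gamma$, agrees with $f$ in the trace sense, and on any cube the derivatives of $Ef$ are controlled by the differences $f_Q-f_{Q'}$ across neighbouring Whitney cubes, each of which is in turn dominated by a local piece of the boundary Besov seminorm. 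Summing these contributions against the correct powers of $\ell(Q)$, again with Ahlfors regularity as bookkeeping, gives $\|Ef\|_{H^{\alpha}(\mathbb R^2)}\le C\|f\|_{H^s(\Gamma)}$.

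The main obstacle is this extension estimate for $\alpha\in[1,3/2)$: once $\alpha\ge1$ one must bound genuine second-order Sobolev content of $Ef$, so the averages $f_Q$ and their cube-to-cube differences must be organized so that each factor $\ell(Q)^{-1}$ produced by differentiating the partition of unity is compensated by a matching negative power extracted from $|f(z)-f(\zeta)|^2|z-\zeta|^{-1-2s}$ on the boundary; this is precisely the delicate part of the Jonsson--Wallin bookkeeping. I note that in the easier range $1/2<\alpha<1$ one can bypass the Whitney construction entirely: there both $H^{\alpha}(\mathbb R^2)$ and $H^s(\Gamma)$ are given by bi-Lipschitz-invariant Gagliardo seminorms, so one may transport the problem to the model circle $\mathbb T\subset\mathbb R^2$ through the global bi-Lipschitz map defining $\Gamma$ and invoke the elementary Fourier-analytic trace theorem on $\mathbb T$. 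This shortcut fails for $\alpha\ge1$ because bi-Lipschitz maps do not preserve higher-order Sobolev spaces, which is exactly why the intrinsic $1$-set argument is needed in general.
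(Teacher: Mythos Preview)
The paper does not prove this theorem at all: it is quoted as a known result from Jonsson--Wallin \cite{JW1} (the bracket in the theorem header reads ``Page 182 in \cite{JW1}'') and is immediately followed by further discussion, with no proof environment. So there is no ``paper's own proof'' to compare against; the paper simply imports the statement as background and, a few lines later, even remarks ``However, we will not use it in our proof.''

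Your sketch is a faithful outline of the Jonsson--Wallin argument for $d$-sets specialized to $d=1$: Ahlfors--David regularity of $\Gamma$ gives $\sigma(\Gamma\cap B(x,r))\approx r$, boundedness of $\gamma$ is obtained by telescoping ball-averages and reassembling the Besov square function, and surjectivity is obtained by a Whitney extension with cube-averaged boundary data. Your identification of the delicate point---that for $\alpha\ge 1$ one must control genuine second-order content of $Ef$, so the $\ell(Q)^{-1}$ factors from differentiating the partition of unity have to be matched against boundary oscillation---is accurate, and your remark that for $1/2<\alpha<1$ one may instead transport everything to $\mathbb T$ by the global bi-Lipschitz map (since both seminorms are bi-Lipschitz invariant in that range) is in the spirit of how the paper itself handles $H^s(\Gamma)$ elsewhere. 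In short: your proposal is correct and is essentially the standard proof the paper is citing, not an alternative to anything the authors wrote.
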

   The restriction map from $H^{\alpha}(\mathbb R^2)$ to $H^\alpha(\Omega)$ is well-defined, and it is continuous and surjective onto $H^\alpha(\Omega)$, see e.g. \cite{JW1, Jon}. The surjectivity guarantees the existence of an extension operator for $\Omega$; that makes $H^{\alpha}(\Omega)$ inherits many properties possessed by $H^{\alpha}(\mathbb R^2)$. In particular, $C_c^{\infty}(\mathbb R^2)|_{\Omega}$ is a dense subset  of $H^\alpha(\Omega)$. 
   Let $u \in H^{\alpha}(\Omega)$ and $\tilde u \in H^{\alpha}(\mathbb R^2)$ such that $\tilde u|_{\Omega} = u$. It can be shown that the operator $\gamma(u)$, defined as $\gamma(\tilde u)$, does not depend on the choice of the extension $\tilde u \in H^{\alpha}(\mathbb R^2)$ (see Theorem 8.7 in \cite{BMMM} and also \cite{VW}, \cite{Din}). 
   We then get to the following theorem. However, we will not use it in our proof.  
   \begin{theo}
       The trace operator $\gamma$ is a continuous map from $H^{\alpha}(\Omega)$ to $H^s(\Gamma)$, whose kernel is  $H_0^\alpha(\Omega)$, the closure of $C_c^\infty(\Omega)$ in $H^\alpha(\Omega)$.
   \end{theo}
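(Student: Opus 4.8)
The plan is to derive both assertions from the three facts already assembled in this section: the bounded extension operator $E\colon H^{\alpha}(\Omega)\to H^{\alpha}(\mathbb R^2)$ furnished by the surjectivity of the restriction map, the continuity (and surjectivity) of the trace $\gamma\colon H^{\alpha}(\mathbb R^2)\to H^s(\Gamma)$, and the fact that $\gamma(u):=\gamma(\tilde u)$ on $H^{\alpha}(\Omega)$ is independent of the extension $\tilde u$. Continuity is then immediate: for $u\in H^{\alpha}(\Omega)$ we may take $\tilde u=Eu$, so that $\gamma(u)=\gamma(Eu)$ and
\[
\|\gamma(u)\|_{H^s(\Gamma)}=\|\gamma(Eu)\|_{H^s(\Gamma)}\le C\|Eu\|_{H^{\alpha}(\mathbb R^2)}\le C'\|u\|_{H^{\alpha}(\Omega)},
\]
the first inequality being the continuity of the trace on $\mathbb R^2$ and the second the boundedness of $E$.

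For the kernel, the inclusion $H_0^{\alpha}(\Omega)\subseteq\ker\gamma$ is the easy half. If $\phi\in C_c^\infty(\Omega)$, its extension by zero lies in $C_c^\infty(\mathbb R^2)$ and vanishes in a neighborhood of $\Gamma$, whence $\gamma(\phi)=0$; since $\gamma$ is continuous by the previous step and $H_0^\alpha(\Omega)$ is by definition the $H^\alpha(\Omega)$-closure of $C_c^\infty(\Omega)$, the map $\gamma$ vanishes on all of $H_0^\alpha(\Omega)$. The substance therefore lies in the reverse inclusion $\ker\gamma\subseteq H_0^\alpha(\Omega)$. Given $u\in H^\alpha(\Omega)$ with $\gamma(u)=0$, write $\bar u$ for its extension by zero to $\mathbb R^2$. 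The decisive step is to show $\bar u\in H^\alpha(\mathbb R^2)$: this is a gluing statement, namely that the function equal to $u$ on $\Omega$ and to $0$ on the exterior domain is globally $H^\alpha$ precisely because the two pieces share the same (vanishing) trace on $\Gamma$ and because $\Gamma$ carries no area. Once $\bar u\in H^\alpha(\mathbb R^2)$ is known to be supported in $\overline\Omega$, I would approximate it by elements of $C_c^\infty(\Omega)$ in two moves: first contract the support strictly into $\Omega$ — for instance by composing with a dilation $z\mapsto z_0+(1-\eta)(z-z_0)$ about an interior point $z_0$, or by an inward translation along the directions supplied by the geometry of $\Omega$ — using the continuity of these operations on $H^\alpha(\mathbb R^2)$ to make the error tend to $0$ with $\eta$; then mollify the resulting compactly-supported-in-$\Omega$ function to land in $C_c^\infty(\Omega)$ while remaining close in $H^\alpha$. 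Restricting this sequence to $\Omega$ exhibits $u$ as an $H^\alpha(\Omega)$-limit of $C_c^\infty(\Omega)$ functions, i.e. $u\in H_0^\alpha(\Omega)$.

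The main obstacle is the zero-extension/gluing step, and it is exactly here that the range $\alpha\in(1/2,3/2)$, with the half-integer endpoints excluded, is used: for such $\alpha$ a function of vanishing trace extends by zero across $\Gamma$ with no loss of regularity, whereas at $\alpha=1/2$ (or $3/2$) this fails and the kernel characterization breaks down. Making this rigorous requires the geometric input that a chord-arc domain is a uniform ($(\epsilon,\delta)$-)domain whose boundary $\Gamma$ is Ahlfors–David regular of dimension one and of vanishing area; these are precisely the hypotheses under which the Jonsson–Wallin trace theory \cite{JW1} applies and under which both the gluing and the interior-approximation steps are legitimate. A secondary technical point is that the inward-contraction must remain compatible with the non-smooth boundary of $\Omega$, which is guaranteed by the segment (cone) property that chord-arc domains enjoy.
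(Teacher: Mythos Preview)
The paper does not actually supply a proof of this theorem. It records the statement as a consequence of the preceding discussion and the cited references (Theorem 8.7 in \cite{BMMM}, and \cite{VW}, \cite{Din}, together with the Jonsson--Wallin trace theorem \cite{JW1}), and immediately adds ``However, we will not use it in our proof.'' So there is no in-paper argument to compare your proposal against; your sketch is in effect a proposed expansion of what the paper leaves to the literature.

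On its own merits: your derivation of continuity via $\gamma(u)=\gamma(Eu)$ and the easy inclusion $H_0^\alpha(\Omega)\subseteq\ker\gamma$ are both correct and match how the paper sets things up. For the hard inclusion $\ker\gamma\subseteq H_0^\alpha(\Omega)$ your three-step plan (zero extension across $\Gamma$, push the support strictly inside $\Omega$, mollify) is the standard route, and you are right that the gluing step is exactly where the restriction $\alpha\in(1/2,3/2)$ enters. One genuine caution: the dilation $z\mapsto z_0+(1-\eta)(z-z_0)$ maps $\overline\Omega$ into $\Omega$ only when $\Omega$ is star-shaped about $z_0$, which a general chord-arc domain need not be, and chord-arc curves are not in general local Lipschitz graphs, so the ``local graph plus inward translation'' picture is also not directly available. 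Your hedge (``directions supplied by the geometry of $\Omega$'') points in the right direction, but making it rigorous requires either the $(\epsilon,\delta)$/NTA machinery (corkscrew plus Harnack chains) or a direct appeal to the density results in the references the paper cites. This is a technical gap in the execution rather than a flaw in the overall strategy.
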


We now  come back  to the Plemelj-Calder\'on problem. By a deep theorem of David \cite{Dav} using the Lipschitz result \cite{CMM}, the Cauchy integral operator $T$ is bounded on $L^2(\Gamma, d\sigma)$ (i.e., $H^0(\Gamma)$). Using it we will show the boundedness of the operator $T$ on $H^1(\Gamma)$.
\begin{theo}\label{H1}
    The operator $T$ is bounded on the Sobolev space $H^1(\Gamma)$, and moreover, the operator norm satisfies that 
    $$\|T\|_{H^1(\Gamma)\to H^1(\Gamma)} = \|T\|_{L^2(\Gamma)\to L^2(\Gamma)}.$$
\end{theo}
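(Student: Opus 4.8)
The plan is to deduce the statement from the already–available $L^2$–boundedness of $T$ (David's theorem, quoted just above the statement) by exhibiting an exact intertwining between $T$ acting on $H^1(\Gamma)$ and a \emph{unitarily conjugated} copy of $T$ acting on $H^0(\Gamma)=L^2(\Gamma,d\sigma)$. I parametrize $\Gamma$ by arc length, write $\partial_\sigma f = df/d\sigma$ for the tangential derivative, and normalise $H^1(\Gamma)$ (modulo constants) by $\|f\|_{H^1(\Gamma)}=\|\partial_\sigma f\|_{L^2(\Gamma)}$; by the very definition of $H^1(\Gamma)$ through the bi-Lipschitz map $\lambda$ (with $|\lambda'|=1$), the operator $\partial_\sigma$ is then an isometric isomorphism of $H^1(\Gamma)/\mathbb C$ onto the mean-zero subspace of $L^2(\Gamma)$. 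Finally I write $\mathsf t = dz/d\sigma$ for the unit tangent field, so that $|\mathsf t|\equiv 1$ and the multiplication operators $M_{\mathsf t}$, $M_{\bar{\mathsf t}}$ are mutually inverse isometries of $L^2(\Gamma,d\sigma)$.

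The core of the argument is the commutation (intertwining) identity
\[
\partial_\sigma(Tf)=M_{\mathsf t}\,T\,M_{\bar{\mathsf t}}\,(\partial_\sigma f),\qquad f\in H^1(\Gamma),
\]
which I would first establish for $f$ in a dense smooth class (e.g. $D(\Gamma)$) and then propagate by continuity. To derive it, extend $Tf$ to the function $F$ holomorphic off $\Gamma$ given by the Cauchy integral; differentiating under the integral gives $F'(w)=\frac{1}{2\pi i}\int_\Gamma f(\zeta)(\zeta-w)^{-2}\,d\zeta$, and an integration by parts along the \emph{closed} curve $\Gamma$ (so that no boundary terms appear) converts this into the Cauchy integral of $\bar{\mathsf t}\,\partial_\sigma f$, i.e. $F'$ is precisely the Cauchy extension of $\bar{\mathsf t}\,\partial_\sigma f$. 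Taking interior and exterior boundary traces via Plemelj's formulas, these traces add up to $2\,T(\bar{\mathsf t}\,\partial_\sigma f)$; since the tangential derivative of a holomorphic boundary function equals $\mathsf t$ times its complex boundary derivative, and $Tf=\tfrac12(F_i+F_e)|_\Gamma$, the identity above follows.

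Granting the intertwining identity, set $S=M_{\mathsf t}\,T\,M_{\bar{\mathsf t}}$. As a conjugate of $T$ by the unitary $M_{\mathsf t}$ on $L^2(\Gamma,d\sigma)$ it satisfies $\|S\|_{L^2\to L^2}=\|T\|_{L^2\to L^2}$, which is finite by David's theorem. Hence, for $f\in H^1(\Gamma)$,
\[
\|Tf\|_{H^1(\Gamma)}=\|\partial_\sigma(Tf)\|_{L^2(\Gamma)}=\|S(\partial_\sigma f)\|_{L^2(\Gamma)}\le\|T\|_{L^2\to L^2}\,\|\partial_\sigma f\|_{L^2(\Gamma)}=\|T\|_{L^2\to L^2}\,\|f\|_{H^1(\Gamma)},
\]
which gives boundedness on $H^1(\Gamma)$ together with the inequality $\|T\|_{H^1\to H^1}\le\|T\|_{L^2\to L^2}$. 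For the reverse inequality I would run the same chain backwards: since $\partial_\sigma$ maps $H^1(\Gamma)$ \emph{onto} the mean-zero part of $L^2(\Gamma)$ and $S$ is unitarily equivalent to $T$, a (near-)maximizing vector for $T$ on $L^2$ transfers, through $M_{\bar{\mathsf t}}$ and an antiderivative, to a test function for $T$ on $H^1(\Gamma)$ realizing the same ratio, yielding $\|T\|_{H^1\to H^1}\ge\|T\|_{L^2\to L^2}$ and hence the asserted equality.

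I expect the rigorous justification of the intertwining identity to be the main obstacle: the differentiation of the Cauchy integral and the integration by parts have to be controlled at the level of the principal-value boundary operator (not merely for the off-curve holomorphic extension), the Plemelj jump relations must be invoked on a merely rectifiable chord-arc curve, and the identity must then be extended from the dense smooth class to all of $H^1(\Gamma)$ using the $L^2$-continuity of $T$ and $S$. A secondary delicate point lies in the \emph{reverse} norm inequality: because $\partial_\sigma$ lands in the mean-zero subspace, the exact equality of norms requires that $\|T\|_{L^2\to L^2}$ be approached on that codimension-one subspace — which is expected since the top of the spectrum of a singular integral operator is not an isolated attained eigenvalue, but this should be argued rather than assumed.
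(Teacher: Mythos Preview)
Your proposal is correct and rests on the same underlying identity as the paper, namely that $(Tf)'=T(f')$ where $f'=df/dz=\bar{\mathsf t}\,\partial_\sigma f$; the two arguments differ only in how this identity is reached. The paper works \emph{backwards}: it applies David's theorem directly to $f'\in L^2(\Gamma)$ to obtain the Plemelj splitting $f'=\varphi_i+\varphi_e$ with $\varphi_{i,e}\in E^2(\Omega_{i,e})$, then takes holomorphic anti-derivatives $\Phi_{i,e}$ and invokes a classical Hardy--Littlewood lemma (after pulling back by the Riemann map) to show that $\Phi_{i,e}$ are continuous up to $\Gamma$ with $\Phi_{i,e}'=\varphi_{i,e}$ there, whence $f=\Phi_i+\Phi_e$. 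You instead work \emph{forwards}, differentiating the Cauchy extension of $f$ and integrating by parts along the closed curve to obtain the intertwining $\partial_\sigma T=M_{\mathsf t}TM_{\bar{\mathsf t}}\,\partial_\sigma$ on a dense class. Your packaging via the unitary conjugate $S=M_{\mathsf t}TM_{\bar{\mathsf t}}$ makes the norm statement immediate and bypasses the explicit Hardy-space machinery; the paper's route, on the other hand, makes the function-theoretic justification (existence and boundary behaviour of $\Phi_{i,e}$) completely explicit rather than hidden in the ``extend by continuity'' step. On the reverse norm inequality you are more scrupulous than the paper, which simply asserts the equality; note that for the subsequent interpolation only the bound $\|T\|_{H^1\to H^1}\le\|T\|_{L^2\to L^2}$ is actually used.
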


\begin{proof}
Let  $f\in H^1(\Gamma)$ so that $f\circ \lambda$ is an anti-derivative of a function $h\in L^2(\mathbb T)$. Let us define the function $f'$ 
by $f'\circ\lambda \lambda' = h$. Then $f'\in L^2(\Gamma, d\sigma)$. The space $H^1(\Gamma)$ is endowed with the natural norm $\|f\|_{H^1(\Gamma)} = \|f'\|_{L^2(\Gamma, d\sigma)}$. 
By David's theorem we may write 
\begin{equation}\label{decom}
    f'=\varphi_i + \varphi_e
\end{equation}
on $\Gamma$. Here, $\varphi_i$ and $\varphi_e$ belong to the Hardy spaces of index $2$ on $\Omega_i$ and $\Omega_e$, respectively, and then $\varphi_i$ and $\varphi_e$ have non-tangential boundary limits almost everywhere on $\Gamma$ with respect to the arc-length measure, denoted still by $\varphi_i$ and $\varphi_e$, so that $\|\varphi_i\|_{L^2(\Gamma, d\sigma)}$ and $\|\varphi_e\|_{L^2(\Gamma, d\sigma)}$ are both controlled from above by $\|f\|_{H^1(\Gamma)}$.

 Set $\Gamma_r = \tau(|z| = r)$ to be  ``circular curves'', where $\tau$ is a Riemann map that takes $\mathbb D$ onto $\Omega$. 
By the definition, the Hardy space  $E^p(\Omega)$ of index $p$, $p\ge 1$, on the chord-arc domain $\Omega$ consists of holomorphic functions $F$ on $\Omega$ with a finite norm
$$
\Vert F \Vert_p =\Bigg (\frac{1}{2\pi}\sup_{r}\int_{\Gamma_r}|F(w)|^p d\sigma(w) \Bigg)^{1/p}.
$$
 Notice that $\tau$ is a homeomorphism of the closures $\bar{\mathbb D}$ onto $\Omega\cup\Gamma$, and absolutely continuous on $\mathbb T$ since $\Gamma$ is rectifiable.  
 
  Recall that $\varphi_i \in E^2(\Omega_i)$; that is included in $E^1(\Omega_i)$, we then have $\varphi_i\circ\tau\tau' \in E^1(\mathbb D_i)$. Let now $\Phi_i$ be an anti-derivative of $\varphi_i$ on $\Omega_i$. Then we see $(\Phi_i\circ\tau)' = \varphi_i\circ\tau\tau' \in E^1(\mathbb D_i)$. By a result of Hardy-Littlewood (see e.g. Page 89 in \cite{Gar}), we conclude that $\Phi_i\circ\tau$ is continuous on $\bar{\mathbb D_i}$ and absolutely continuous on $\mathbb T$ such that 
$(\Phi_i\circ\tau)'(\zeta) = \lim_{r \to 1}(\Phi_i\circ\tau)'(r\zeta)$ almost everywhere on $\mathbb T$, and thus $(\Phi_i\circ\tau)'(\zeta) = \varphi_i\circ\tau(\zeta)\tau'(\zeta)$ almost everywhere on $\mathbb T$. Using it, let us define $\Phi_i' = \varphi_i$ on $\Gamma$, and we can similarly define  $\Phi_e' = \varphi_e$ on $\Gamma$. 
Combined with \eqref{decom}  that leads to, by adjusting the constants,  $$f=\Phi_i+\Phi_e$$
on $\Gamma$ 
with the norm of $\Phi_{i,e}$ in $H^1(\Gamma)$ controlled from above by $\|f\|_{H^1(\Gamma)}$. By  Plemelj's formula we conclude that the operator $T$ is bounded on $H^1(\Gamma)$ with respect to the norm $\|\cdot\|_{H^1(\Gamma)}$, and moreover, we have $\|T\|_{H^1(\Gamma)\to H^1(\Gamma)} = \|T\|_{L^2(\Gamma)\to L^2(\Gamma)}$. 
\end{proof}

Now we get ready to show the main result of this section: the operator $T$ is bounded on the fractional Sobolev space $H^s(\Gamma)$ for $0 < s < 1$. 
Because of  the isomorphism between $H^s(\Gamma)$ and $H^s(\mathbb T)$,  this boils down to proving that the operator with kernel
$$ \text{P.V.}\frac{\lambda'(\zeta)}{\lambda(\zeta)-\lambda(\xi)}$$
is bounded on $H^s(\mathbb T)$. Set $g = f\circ\lambda$. 
This operator is more precisely defined by
$$\widetilde Tg(\xi)=\frac{1}{2\pi i}\text{P.V.}\int_{\mathbb T}\frac{g(\zeta)\lambda'(\zeta)}{\lambda(\zeta)-\lambda(\xi)}d\zeta.$$

Let us introduce Calder\'on's interpolation theorem.
\begin{theo}[Page 38 in \cite{Tri}, Chapter 6 in \cite{BL} and also \cite{Cal1}]\label{Cald}Let $s_0, s_1 \in [0, 1]$. If $s_0 \ne s_1$ then we have, in terms of complex interpolation theory of Banach spaces,
$$ H^s(\mathbb T)=[H^{s_0}(\mathbb T),H^{s_1}(\mathbb T)]_{\theta}$$
where  $s = (1-\theta)s_0 +\theta s_1$ and the exponent $\theta \in (0, 1)$. 
\end{theo}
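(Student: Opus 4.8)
The plan is to pass to the Fourier side and reduce the assertion to a standard interpolation statement for weighted sequence spaces. Working modulo constants, Plancherel's theorem identifies $H^{s}(\mathbb{T})$ isometrically with the weighted space $\ell^2(\mathbb{Z}\setminus\{0\},\,|n|^{2s})$ of Fourier sequences $(\hat f(n))$ with $\sum_{n\neq 0}|n|^{2s}|\hat f(n)|^2<\infty$; since on the compact circle $|n|^{2s}$ and $\langle n\rangle^{2s}=(1+|n|^2)^{s}$ are comparable for $|n|\ge 1$, one may equally use the weight $w_s(n)=\langle n\rangle^{2s}$. Thus it suffices to prove the weighted-$\ell^2$ interpolation identity
$$[\ell^2(w_{s_0}),\ell^2(w_{s_1})]_\theta=\ell^2(w_{s_\theta}),\qquad s_\theta=(1-\theta)s_0+\theta s_1,$$
because $w_{s_0}(n)^{1-\theta}w_{s_1}(n)^\theta=\langle n\rangle^{2[(1-\theta)s_0+\theta s_1]}=w_{s_\theta}(n)$, and the Fourier identification then transports this back to $H^{s_\theta}(\mathbb{T})=[H^{s_0}(\mathbb{T}),H^{s_1}(\mathbb{T})]_\theta$.

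For the upper bound on the interpolation norm, i.e.\ the inclusion $\ell^2(w_{s_\theta})\hookrightarrow[\ell^2(w_{s_0}),\ell^2(w_{s_1})]_\theta$, I would exhibit an explicit analytic family. Given $a=(a_n)\in\ell^2(w_{s_\theta})$, set
$$F(z)_n=a_n\,w_{s_\theta}(n)^{1/2}\,w_{s_0}(n)^{-(1-z)/2}\,w_{s_1}(n)^{-z/2}.$$
One checks directly that $F(\theta)=a$ and that $F$ is analytic and bounded into $\ell^2(w_{s_0})+\ell^2(w_{s_1})$ on the closed strip. Because each weight is a positive real, $|w^{z}|=w^{\Re z}$, so on the line $\Re z=0$ one finds $\|F(it)\|_{\ell^2(w_{s_0})}=\|a\|_{\ell^2(w_{s_\theta})}$ and on $\Re z=1$ likewise $\|F(1+it)\|_{\ell^2(w_{s_1})}=\|a\|_{\ell^2(w_{s_\theta})}$. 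By the definition of the complex method this yields $\|a\|_{[\,\cdot\,]_\theta}\le\|a\|_{\ell^2(w_{s_\theta})}$.

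For the reverse inclusion I would exploit the Hilbert-couple structure. Assuming (after relabelling) $s_0\le s_1$, the two norms are related by the positive self-adjoint multiplication operator $A$ on $\ell^2(w_{s_0})$ acting by $(Aa)_n=(w_{s_1}(n)/w_{s_0}(n))^{1/2}a_n=\langle n\rangle^{s_1-s_0}a_n$, so that $\ell^2(w_{s_1})=\mathrm{dom}(A)$ with equivalent norm and $\|a\|_{\ell^2(w_{s_1})}=\|Aa\|_{\ell^2(w_{s_0})}$. For a regular Hilbert couple of this form the complex interpolation space is the domain of the fractional power $A^{\theta}$, with $\|a\|_{[\,\cdot\,]_\theta}=\|A^{\theta}a\|_{\ell^2(w_{s_0})}$ by the spectral theorem; and $\|A^{\theta}a\|_{\ell^2(w_{s_0})}^2=\sum_n\langle n\rangle^{2\theta(s_1-s_0)}\langle n\rangle^{2s_0}|a_n|^2=\|a\|_{\ell^2(w_{s_\theta})}^2$, which is exactly the matching lower bound. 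Alternatively the lower bound follows by duality, since $[\ell^2(w_{s_0}),\ell^2(w_{s_1})]_\theta'=[\ell^2(w_{s_0})',\ell^2(w_{s_1})']_\theta=[\ell^2(w_{-s_0}),\ell^2(w_{-s_1})]_\theta$ and the same weight computation applies to the dual exponents.

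The main obstacle is not the upper bound, which is the routine analytic-family computation above, but obtaining equality of norms rather than mere equivalence of spaces: this requires either verifying that $(\ell^2(w_{s_0}),\ell^2(w_{s_1}))$ is a regular couple so that the spectral description of $[\,\cdot\,]_\theta$ as $\mathrm{dom}(A^{\theta})$ applies, or carrying out the duality argument with care. The restriction $s_0,s_1\in[0,1]$ keeps the weights in a controlled range and makes density of finitely supported sequences (hence regularity) transparent; the only genuine bookkeeping is the zeroth Fourier mode, which is removed by working modulo constants exactly as the paper does throughout.
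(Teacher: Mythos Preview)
Your argument is correct: the Fourier transform reduces the statement to complex interpolation of weighted $\ell^2$ spaces, and both the analytic-family upper bound and the Hilbert-couple/spectral lower bound are standard and valid. The only remark is that the paper does not supply its own proof of this theorem at all; it is quoted as a known result with references to Triebel, Bergh--L\"ofstr\"om, and Calder\'on, and is then used as a black box in the interpolation arguments of Sections~4 and~5. Your proof sketch is essentially the argument one finds in those references (the Stein--Weiss weighted interpolation, or equivalently the fractional-power description of complex interpolation for a Hilbert couple), so there is no divergence in method---you have simply unpacked what the paper cites.
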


\noindent The following functorial property of complex interpolation  is a basic assertion in interpolation theory. 
Let $\{A_0, A_1\}$ and $\{B_0, B_1\}$  be two interpolation couples of Banach spaces and let $L$ be a linear operator mapping from $A_0+A_1$ into $B_0+B_1$ such that its restriction to $A_j$ is a linear and bounded operator from $A_j$ into $B_j$ with norm $M_j$, where $j = 0, 1$. Then the restriction of $L$ to 
$[A_0, A_1]_{\theta}$, $0 < \theta < 1$ is a linear and bounded operator from $[A_0, A_1]_{\theta}$ into $[B_0, B_1]_{\theta}$ with norm $M_\theta$. Further, it is known that the complex interpolation method is an exact interpolation functor of exponent $\theta$ (see Page 88 in \cite{BL}) meaning that 
\begin{equation}\label{norm}
    M_{\theta} \leq M_0^{1-\theta}M_1^{\theta}.
\end{equation}

It now suffices to invoke Calder\'on's interpolation theorem to conclude the following. 
\begin{theo}
    For $0 < s < 1$, the operator $T$ is bounded on $H^s(\Gamma)$.
\end{theo}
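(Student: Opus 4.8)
The plan is to obtain the result by complex interpolation between the two endpoint cases $s=0$ and $s=1$, both of which are already in hand. Via the bi-Lipschitz parametrization $\lambda$, the composition map $f\mapsto f\circ\lambda$ is an isomorphism of $H^s(\Gamma)$ onto $H^s(\mathbb T)$ for every $s\in[0,1]$: it is isometric for $s=0$ since $|\lambda'|=1$, it is the very definition of $H^1(\Gamma)$ for $s=1$, and it is the stated Douglas-norm equivalence for $0<s<1$. Under this isomorphism $T$ is conjugated to the operator $\widetilde T$ with kernel $\mathrm{P.V.}\,\lambda'(\zeta)/(\lambda(\zeta)-\lambda(\xi))$, so it suffices to prove that $\widetilde T$ is bounded on $H^s(\mathbb T)$ for $0<s<1$.

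First I would record the two endpoints. At $s=0$, David's theorem \cite{Dav} gives that $T$ is bounded on $L^2(\Gamma,d\sigma)=H^0(\Gamma)$, hence $\widetilde T$ is bounded on $H^0(\mathbb T)=L^2(\mathbb T)$. At $s=1$, Theorem \ref{H1} gives that $T$ is bounded on $H^1(\Gamma)$, hence $\widetilde T$ is bounded on $H^1(\mathbb T)$. Since $H^1(\mathbb T)\subset L^2(\mathbb T)$, the sum space of the couple $\{H^0(\mathbb T),H^1(\mathbb T)\}$ is simply $L^2(\mathbb T)$, and $\widetilde T$ is a single linear operator on this sum space whose restrictions to $H^0$ and to $H^1$ are exactly the two bounded operators just described. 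Thus $\widetilde T$ satisfies the hypotheses of the functorial property of complex interpolation with respect to the couple $\{H^0(\mathbb T),H^1(\mathbb T)\}$ on both sides.

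Next I would apply Calder\'on's interpolation theorem (Theorem \ref{Cald}) with $s_0=0$ and $s_1=1$: for $\theta\in(0,1)$ one has $H^s(\mathbb T)=[H^0(\mathbb T),H^1(\mathbb T)]_\theta$ with $s=(1-\theta)\cdot 0+\theta\cdot 1=\theta$, so the interpolation parameter coincides with $s$. The functorial property then yields at once that $\widetilde T$ restricts to a bounded operator on $[H^0(\mathbb T),H^1(\mathbb T)]_s=H^s(\mathbb T)$ for every $0<s<1$, with the quantitative bound
$$\|\widetilde T\|_{H^s(\mathbb T)\to H^s(\mathbb T)}\le \|\widetilde T\|_{L^2(\mathbb T)\to L^2(\mathbb T)}^{\,1-s}\,\|\widetilde T\|_{H^1(\mathbb T)\to H^1(\mathbb T)}^{\,s}$$
coming from \eqref{norm}. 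Transporting back through the isomorphism induced by $\lambda$ gives the boundedness of $T$ on $H^s(\Gamma)$.

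There is no genuine obstacle remaining: the substance of the argument lies entirely in the two endpoint estimates, both of which are already available (the deep $L^2$ result of David and the self-contained Theorem \ref{H1}). The only points requiring care are bookkeeping ones, namely checking that $f\mapsto f\circ\lambda$ is \emph{simultaneously} an isomorphism on the whole scale $s\in[0,1]$ so that the conjugation $\widetilde T=M_\lambda^{-1}TM_\lambda$ is legitimate throughout, and confirming that the restrictions of $\widetilde T$ to $H^0(\mathbb T)$ and to $H^1(\mathbb T)$ are restrictions of one and the same operator on $L^2(\mathbb T)$, which is precisely what allows the functorial property to apply. With the choice $s_0=0$, $s_1=1$ forcing $\theta=s$, everything else is immediate.
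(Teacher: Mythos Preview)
Your proposal is correct and follows essentially the same approach as the paper: conjugate $T$ to $\widetilde T$ via the bi-Lipschitz parametrization $\lambda$, use David's theorem at $s=0$ and Theorem~\ref{H1} at $s=1$, then apply Calder\'on's interpolation theorem (Theorem~\ref{Cald}) with $s_0=0$, $s_1=1$. The only cosmetic difference is that the paper further invokes the norm equality $\|T\|_{H^1(\Gamma)\to H^1(\Gamma)}=\|T\|_{L^2(\Gamma)\to L^2(\Gamma)}$ from Theorem~\ref{H1} to simplify your interpolated bound to a single factor $\|T\|_{L^2(\Gamma)\to L^2(\Gamma)}$.
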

\begin{proof}
By taking $s_0 = 0$ and $s_1 = 1$ in Theorem \ref{Cald}, we have that
$$
H^s(\mathbb T) = [L^2(\mathbb T), H^1(\mathbb T)]_s.
$$
By David's theorem and Theorem \ref{H1}, we conclude that the operator $\widetilde T$ is bounded on $H^s(\mathbb T)$ with the operator norm $\|\widetilde T\|_{H^s(\mathbb T)\to H^s(\mathbb T)} = \|\widetilde T\|_{L^2(\mathbb T)\to L^2(\mathbb T)}$, and equivalently, the operator $T$ is bounded on $H^s(\Gamma)$ with the operator norm $\|T\|_{H^s(\Gamma)\to H^s(\Gamma)} = \|T\|_{L^2(\Gamma)\to L^2(\Gamma)}$.
    \end{proof}
 It follows from  Plemelj's  formula that every function $f\in H^s(\Gamma)$ may be written uniquely as $f=\Phi_i+\Phi_e$ with $\|\Phi_{i,e}\|_{H^s(\Gamma)}\leq C\|f\|_{H^s(\Gamma)}$ for some constant $C$, $\Phi_{i,e}$ being boundary values of holomorphic functions in $\Omega_i$ and $\Omega_e$, respectively. The uniqueness of decomposition  for $0 < s \leq 1$  just follows from the case of $s = 0$.

A theorem by Murai \cite{Mur} states that for a Lipschitz curve $\Gamma$ with Lipschitz norm $M$, $\|T\|_{L^2(\Gamma)\to L^2(\Gamma)}$ is no more than $C(1+M)^{3/2}$ (see also \cite{Dav87}), where $C$ is a universal constant. Plugging this information in the preceding theorem we obtain the same bound for $\|T\|_{H^{1/2}(\Gamma)\to H^{1/2}(\Gamma)}$. But the norm  $\|T\|_{H^{1/2}(\Gamma)\to H^{1/2}(\Gamma)}$ depends only on the $L^2$ boundedness of the Beurling transform (see Section 2), so that it is actually independent of $M$. 
In order to get better estimates of $\|T\|_{H^{s}(\Gamma)\to H^{s}(\Gamma)}$ we thus use Calder\'on's interpolation theorem between $L^2(\Gamma)$ and $H^{1/2}(\Gamma)$ for $0<s<1/2$ and between $H^{1/2}(\Gamma)$ and $H^1(\Gamma)$ for $1/2<s<1$. We use more precisely that 
\begin{align*}
    H^s(\Gamma)&=[L^2(\Gamma),H^{1/2}(\Gamma)]_{2s},\;\;\;\;\;\,0<s<1/2;\\
    H^s(\Gamma)&=[H^{1/2}(\Gamma),H^1(\Gamma)]_{2s-1},\;\;1/2<s<1.
\end{align*}
Using \eqref{norm}, 
we get to 
\begin{theo} If $\Gamma $ is a Lipschitz curve with Lipschitz constant $M$ then we have, for $0<s<1$, 
$$
\|T\|_{H^{s}(\Gamma)\to H^{s}(\Gamma)} \leq C(1+M)^{\frac 32|1-2s|}
$$
where $C$ is a universal constant. 
\end{theo}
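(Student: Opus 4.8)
The plan is to obtain the bound by complex interpolation anchored at the three exponents $s=0$, $s=1/2$ and $s=1$, exploiting the crucial fact that the middle estimate at $s=1/2$ is \emph{independent} of the Lipschitz constant $M$. Concretely, I would record three facts already available in the excerpt: by Murai's theorem $\|T\|_{L^2(\Gamma)\to L^2(\Gamma)}=\|T\|_{H^0(\Gamma)\to H^0(\Gamma)}\le C(1+M)^{3/2}$; by Theorem~\ref{H1} the same quantity governs the top endpoint, so that $\|T\|_{H^1(\Gamma)\to H^1(\Gamma)}=\|T\|_{L^2(\Gamma)\to L^2(\Gamma)}\le C(1+M)^{3/2}$; and, as noted in Section~2, the norm $\|T\|_{H^{1/2}(\Gamma)\to H^{1/2}(\Gamma)}$ is controlled by the $L^2(\mathbb C)$-boundedness of the Beurling transform alone, hence $\|T\|_{H^{1/2}(\Gamma)\to H^{1/2}(\Gamma)}\le C_0$ for a universal constant $C_0$ carrying no dependence on $M$.

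Next I would split the range $0<s<1$ at the midpoint and apply Calderón's interpolation theorem (Theorem~\ref{Cald}) together with the functorial estimate \eqref{norm}. For $0<s<1/2$ I use $H^s(\Gamma)=[L^2(\Gamma),H^{1/2}(\Gamma)]_{2s}$, so that \eqref{norm} with $\theta=2s$, $M_0=\|T\|_{L^2\to L^2}$ and $M_1=\|T\|_{H^{1/2}\to H^{1/2}}$ yields
\[
\|T\|_{H^s(\Gamma)\to H^s(\Gamma)}\le\big(C(1+M)^{3/2}\big)^{1-2s}\,C_0^{\,2s}.
\]
For $1/2<s<1$ I use $H^s(\Gamma)=[H^{1/2}(\Gamma),H^1(\Gamma)]_{2s-1}$, so that \eqref{norm} with $\theta=2s-1$ gives
\[
\|T\|_{H^s(\Gamma)\to H^s(\Gamma)}\le C_0^{\,2-2s}\,\big(C(1+M)^{3/2}\big)^{2s-1}.
\]

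Finally I would simplify the exponents. In the first range $1-2s=|1-2s|$ and in the second $2s-1=|1-2s|$, so in both cases the $M$-dependence is exactly $(1+M)^{\frac32|1-2s|}$; the remaining scalar factors $C^{1-2s}C_0^{2s}$ and $C_0^{2-2s}C^{2s-1}$ are bounded above by a universal constant uniformly in $s\in(0,1)$ and may be absorbed into the final $C$, while the endpoint $s=1/2$ is immediate since $|1-2s|=0$ there. I do not expect a genuine obstacle: the entire content is already packaged in the three anchor estimates, and the only point requiring care is the \emph{qualitative} observation that the $s=1/2$ norm carries no $M$-dependence---this is precisely what forces the exponent to vanish at the midpoint and produces the gain $\frac32|1-2s|$ over the naive uniform bound $\frac32$ that interpolating only between the endpoints $s=0$ and $s=1$ would give.
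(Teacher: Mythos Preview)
Your proposal is correct and follows essentially the same route as the paper: anchor at $s=0$, $s=1/2$, $s=1$ with the respective bounds $C(1+M)^{3/2}$, $C_0$, and $C(1+M)^{3/2}$, then apply Calder\'on interpolation on each half-interval using \eqref{norm} with $\theta=2s$ (lower half) and $\theta=2s-1$ (upper half). The paper's proof is exactly this argument, including the observation that the $M$-independence at $s=1/2$ is what produces the factor $|1-2s|$ in the exponent.
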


\section{Douglas versus Littlewood-Paley: the chord-arc case}
\subsection{The conjugate operator}
Let $\Omega$ be a Jordan domain whose boundary $\Gamma$ is assumed to be a chord-arc curve of length $2\pi$, and $0<s<1$. We have seen two ways of generalizing  the fractional Sobolev spaces from $\mathbb R$ (or from $\mathbb T$) to $\Gamma$. The first one, which can be called the Douglas way is
$$ H^s(\Gamma)=\bigg\{ f\in L^2(\Gamma): \iint_{\Gamma\times\Gamma}\frac{|f(z)-f(\zeta)|^2}{|z-\zeta|^{1+2s}}d\sigma(z) d\sigma(\zeta) < \infty \bigg\}.$$
The second one, via Littlewood-Paley theory, is
$$ \mathcal{H}^s(\Gamma)=\bigg\{ f\in L^2(\Gamma):\iint_{\Omega_{i,e}} |\nabla u_{i,e}(z)|^2 d(z,\Gamma)^{1-2s}  dxdy<\infty\bigg\}.$$ 
Here,  $u_{i,e}$ stands for the harmonic extension of $f$ to $\Omega_{i,e}$.
We have seen that the Plemelj-Calder\'on problem is solvable for these values of $s$ on both spaces $H^s(\Gamma)$ and $\mathcal{H}^s(\Gamma)$ so that the natural question arises of whether these spaces coincide.

Let $z_0$ be a point in $\Omega$. If $u$ is harmonic in $\Omega$ it is well known that there exists a unique harmonic function $\tilde{u}$ on $\Omega$ such that $\tilde{u}(z_0)=0$ and  $u+i\tilde{u}$ is holomorphic in $\Omega$. In the case of $\Omega=\mathbb{D}$, $\tilde{u}$ is the harmonic (Poisson) extension of $H(f)$ where $H$ is the Hilbert transform:
 $$
    H(f)(e^{i\theta}) = \lim_{\epsilon \to 0}\frac{1}{2\pi}\int_{|\theta-\varphi|>\epsilon} \cot\Big(\frac{\theta-\varphi}{2}\Big)f(e^{i\varphi})d\varphi.
    $$
By Cauchy-Riemann equations, $|\nabla\tilde{u}|=|\nabla u|$, so that $\mathcal{H}^s(\Omega)$ is conjugate-invariant. A necessary condition for $H^s(\Gamma)=\mathcal{H}^s(\Gamma)$ to hold is thus that $H^s(\Gamma)$ is stable by conjugation. Let $\varphi:\,\mathbb D\to \Omega$ be the Riemann map with $\varphi(0)=z_0$. Recall that $t\mapsto z(t)$ denotes the arc-length parametrization of $\Gamma$ and $\lambda(e^{it}) = z(t)$ so that $\varphi=\lambda\circ h$ where $h$ is an absolutely continuous homeomorphism of $\mathbb T$ with $|h'| = |\varphi'|$. A simple computation shows that if $f\in H^s(\Gamma)$; that is $g = f\circ\lambda \in H^s(\mathbb T)$ and $u$ is its harmonic extension to $\Omega$, then $\tilde{u}$ is the harmonic extension of $\tilde f = \tilde g\circ\lambda^{-1}$. Here, 
$$ \tilde{g}=V_h^{-1}\circ H\circ V_h(g),$$
where $V_h(g) = g\circ h$. 

Before we state a theorem about the general case $0 \leq s \leq 1$, let us briefly comment on the limiting case $s=0$; that is the case of $L^2(\mathbb T)$. In this case it is known (see \cite{CoF},   Page 247 in \cite{Gar}) that $V_{h}^{-1}HV_h$ is bounded on $L^2(\mathbb T)$ if and only if $|h'| = |\varphi'|$ belongs to the weight of Muckenhoupt  $A_2$ on  $\mathbb T$.  
It is known (see \cite{JK}) that $\Gamma$ being chord-arc implies $|\varphi'|$ having
$A_\infty$ but there are examples of chord-arc curves such that $|\varphi'|\notin A_2$ (see \cite{JoZi}). A natural condition on the  curve $\Gamma$ implying that $|\varphi'|\in A_2$ is that it is the graph in polar coordinates of a Lipschitz function $\theta\mapsto r(\theta)$.  
In this case, it can be shown that $|\varphi'|$ satisfies the Helson-Szeg\"o condition: $\log |\varphi'| = u + Hv$ with $u\in L^{\infty}$ and $\|v\|_{\infty} < \pi/2$, which is equivalent to $|\varphi'| \in A_2$ as follows from the Helson-Szeg\"o theorem (\cite{Gar}). We may now state:
\begin{theo}\label{conj}
If $\Gamma$ is such that $|\varphi'|\in A_2$ on $\mathbb T$ then, for $0\le s\le 1$, $H^s(\Gamma)$ is stable by conjugation, and moreover, the conjugate operator $f\mapsto\tilde f$ on $H^s(\Gamma)$ is bounded. 
\end{theo}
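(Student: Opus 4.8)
The plan is to establish the boundedness of the conjugation operator at the two endpoints $s=0$ and $s=1$ and then fill in the intermediate range by Calder\'on's interpolation theorem (Theorem \ref{Cald}). By the isomorphism $H^s(\Gamma)\cong H^s(\mathbb T)$ induced by $\lambda$, together with the identification of $f\mapsto\tilde f$ with $\widetilde{(\cdot)}=V_h^{-1}\circ H\circ V_h$ recalled before the statement, it suffices to prove that $V_h^{-1}HV_h$ is bounded on $H^s(\mathbb T)$ for $0\le s\le 1$. Write $w=|h'|=|\varphi'|$; by hypothesis $w\in A_2$, and since the $A_2$ condition is symmetric under $w\mapsto w^{-1}$ we also have $w^{-1}\in A_2$. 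The endpoint $s=0$ is exactly the $L^2(\mathbb T)$ statement already recalled before the theorem: the change of variables $\phi=h(\theta)$ turns $\|\tilde g\|_{L^2}^2$ into $\int |H(V_hg)|^2\,w\,d\theta$, and this is controlled precisely when the Hilbert transform is bounded on $L^2(w)$, i.e.\ when $w\in A_2$.

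The heart of the argument is the endpoint $s=1$. Here I would start from the defining identity $\tilde g\circ h=H(V_hg)$ and differentiate in the angular variable $\theta$. Since $H$ commutes with $d/d\theta$ (both are Fourier multipliers) and $h$ is an increasing absolutely continuous homeomorphism, the chain rule gives $(\tilde g'\circ h)\,w=H\Psi$ where $\Psi:=(g'\circ h)\,w$. Reversing the change of variables $\phi=h(\theta)$ then yields
\[
\|\tilde g'\|_{L^2(\mathbb T)}^2=\int |\tilde g'(h(\theta))|^2\,w\,d\theta=\int |H\Psi|^2\,w^{-1}\,d\theta .
\]
Because $w^{-1}\in A_2$, the Hilbert transform is bounded on $L^2(w^{-1})$, so the right-hand side is $\le C\int|\Psi|^2 w^{-1}\,d\theta$; and $|\Psi|^2 w^{-1}=|g'\circ h|^2 w$, whose integral equals $\|g'\|_{L^2(\mathbb T)}^2$ after undoing the change of variables. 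Recalling $\|g\|_{H^1(\mathbb T)}=\|g'\|_{L^2(\mathbb T)}$, this gives $\|\tilde g\|_{H^1}\le C\|g\|_{H^1}$. Thus the single weight $w$, used through $w\in A_2$ at $s=0$ and through $w^{-1}\in A_2$ at $s=1$, controls both endpoints; this symmetry is the structural reason the hypothesis is phrased as $|\varphi'|\in A_2$.

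With both endpoints in hand, I would conclude by taking $s_0=0$, $s_1=1$ in Theorem \ref{Cald}, so that $H^s(\mathbb T)=[L^2(\mathbb T),H^1(\mathbb T)]_s$, and invoking the functorial property of complex interpolation together with \eqref{norm}: an operator bounded on both endpoints of an interpolation couple is bounded on every intermediate space. This gives the boundedness of $V_h^{-1}HV_h$ on $H^s(\mathbb T)$ for $0<s<1$, and transporting back through $\lambda$ shows that $H^s(\Gamma)$ is stable under $f\mapsto\tilde f$ with bounded conjugation operator.

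I expect the main obstacle to be the rigorous justification of the $s=1$ computation rather than the interpolation step. One must verify that $g\circ h$ is absolutely continuous with an $L^1$ derivative (so that $(g\circ h)'=\Psi$ holds a.e.\ and $H$ genuinely commutes with differentiation), and that $h^{-1}$ is absolutely continuous (valid since $w>0$ a.e.\ for an $A_2$ weight), so that the chain rule applied to $\tilde g=\bigl(H(g\circ h)\bigr)\circ h^{-1}$ is legitimate. These are exactly the points where rectifiability of $\Gamma$ (yielding absolute continuity of $h$) and the $A_2$ hypothesis (yielding positivity of $w$ and the weighted bound) enter; once they are secured, the change-of-variables bookkeeping is routine.
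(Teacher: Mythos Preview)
Your proposal is correct and follows essentially the same route as the paper: both arguments establish boundedness at $s=0$ from the $A_2$ hypothesis, at $s=1$ by differentiating $\tilde g\circ h=H(g\circ h)$, using that $H$ commutes with $d/d\theta$, and applying the weighted Hilbert-transform bound with weight $1/|h'|\in A_2$, and then interpolate via Theorem~\ref{Cald}. The paper phrases the $s=1$ step as $(g\circ h)'\in L^2(1/|h'|)\Rightarrow H((g\circ h)')\in L^2(1/|h'|)\Rightarrow(\tilde g)'\in L^2(\mathbb T)$, which is exactly your computation with $\Psi=(g\circ h)'$; your explicit discussion of the absolute-continuity and commutation issues is a welcome addition that the paper leaves implicit.
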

\begin{proof}
    We have just seen that $V_h^{-1}HV_h$ is bounded on $L^2(\mathbb T)$ if and only if $|\varphi'|\in A_2$.  
Let now $f\in H^1(\Gamma)$ so that $g = f\circ\lambda \in H^1(\mathbb T)$ as before. 
We have $(g\circ h)'=g'\circ h h'\in L^2(1/|h'|)$. Since $|h'|\in A_2$, and the same is true for $1/|h'|$, we have $H((g\circ h)')\in L^2(1/|h'|)$ with a norm bounded by the $L^2(1/|h'|)$-norm of $(g\circ h)'$ 
(see \cite{CoF}); that implies $(\tilde g)'\in L^2(\mathbb T)$ with $\tilde g = V_{h}^{-1}HV_h(g)$, and thus $\tilde f = \tilde g \circ \lambda^{-1} \in H^1(\Gamma)$ such that $\|\tilde f\|_{H^1(\Gamma)}$ bounded by $\|f\|_{H^1(\Gamma)}$. 
The theorem now follows from Calder\'on's interpolation theorem.
\end{proof}

\subsection{The operator $V_s$}
Recall that $\mathcal H^s(\Omega)$, $0\leq s<1$, is the set of harmonic functions $u$ on the domain $\Omega$ bounded by the Jordan curve $\Gamma$ such that its norm
$\|u\|_{\mathcal H^s(\Omega)} < \infty$. Suppose $\Gamma$ is chord-arc  which in particular is rectifiable. It is known that the non-tangential boundary values $f$ of $u$ exist almost everywhere on $\Gamma$ with respect to the arc-length measure and $u$ can be recovered from $f$ using the harmonic extension.  The space of boundary functions of $\mathcal H^s(\Omega)$ is denoted by $\mathcal H^s(\Omega\!\to\!\Gamma)$ so that $\mathcal H^s(\Gamma)=\mathcal H^s(\Omega_i\!\to\!\Gamma)\cap \mathcal H^s(\Omega_e\!\to\!\Gamma)$.  
We may identify $\mathcal H^s(\Omega)$ with its boundary traces  for convenience, and will switch between the two freely. 
We denote the subspace of $\mathcal H^s(\Omega)$ consisting of analytic functions by $\mathcal A^s(\Omega)$, i.e., the set of $u+i\tilde u$ with $u \in \mathcal H^s(\Omega)$.

Let $\Omega,\,\Omega'$ be two Jordan domains  containing $0$ and $\varphi$ a holomorphic diffeomorphism from $\Omega$ onto $\Omega'$ fixing $0$. Let $f\in \mathcal{A}^s(\Omega')$. Using the change of variable $\zeta=\varphi(z)$ we have
$$\iint_{\Omega'} d(\zeta,\Gamma')^{1-2s}|f'(\zeta)|^2d\xi d\eta=\iint_\Omega d(\varphi(z),\Gamma')^{1-2s}|(f\circ\varphi)'(z)|^2dxdy.$$
By the Koebe distortion theorem there exists a universal constant $C>1$ such that
$$ C^{-1}d(z,\Gamma)|\varphi'(z)|\le d(\varphi(z),\Gamma') \le Cd(z,\Gamma)|\varphi'(z)|,$$
so that 
\begin{align}\label{Vs}
C^{-|1-2s|}\iint_\Omega d(z,\Gamma)^{1-2s}|V_s(f)'(z)|^2dxdy 
&\le\iint_{\Omega'} d(\zeta,\Gamma')^{1-2s}|f'(\zeta)|^2d\xi d\eta\\
&\le C^{|1-2s|}\iint_\Omega d(z,\Gamma)^{1-2s}|V_s(f)'(z)|^2dxdy,\nonumber
\end{align}
where $V_s$ is the operator defined by
$$V_s(f)(z)=\int_0^z(f\circ\varphi)'(u)\varphi'(u)^{1/2-s}du.$$
In other words, the operator $V_s$ is a bounded isomorphism between $\mathcal{H}^s(\Omega)$
and $\mathcal{H}^s(\Omega')$ with the operator norm 
$$\|V_s\|,\,\|V_s^{-1}\|\le C^{|1/2-s|}.$$
Notice that for $s=1/2$, this operator is nothing but the composition by $\varphi$ and that, in this case, $V_{1/2}$ is an isometry.

In order to understand better the operator $V_s$ let us rewrite it by using an integration by parts:
\begin{align*}
    V_s(f)(z) &=f\circ\varphi(z)\varphi'(z)^{1/2-s}-(1/2-s)\int_0^zf\circ\varphi(u)\varphi'(u)^{1/2-s}\frac{\varphi''(u)}{\varphi'(u)}du\\
    &=T_sf(z)-(1/2-s)S(T_s(f))(z),
\end{align*}
where 
$$T_sf(z)=f\circ\varphi(z)\varphi'(z)^{1/2-s}$$
and 
$$ Sg(z)=\int_0^z g(u)\frac{\varphi''(u)}{\varphi'(u)}du.$$

Let us now specialize to $s=0$ and $(\Omega,\Omega')=(\mathbb D,\Omega)$. Recall that the Hardy space  $E^2(\Omega)$ is the space of holomorphic functions $f:\,\Omega\to \mathbb C$ such that $T_0(f)\in E^2(\mathbb D)$, the classical Hardy space of the unit disk. We say that a function $g \in {\rm BMOA}(\mathbb D)$ if $g \in E^2(\mathbb D)$ and if in addition the boundary values of $g$ on $\mathbb T$ is of bounded mean oscillation (abbr. BMO) in the sense that 
$$
\sup_{I\subset\mathbb T}\frac{1}{|I|}\int_I |g(z) - g_I||dz| < \infty,
$$
where the supremum is taken over all sub-arcs $I$ of $\mathbb T$ and $g_I$ denotes the integral mean of $g$ over $I$. It is known that $\log\varphi' \in {\rm BMOA}(\mathbb D)$ if $\varphi(\mathbb D)$ is chord-arc, but not vice versa. By Fefferman-Stein, $\log\varphi' \in {\rm BMOA}(\mathbb D)$ if and only if 
$$d\mu =  \bigg|\frac{\varphi''(z)}{\varphi'(z)}\bigg|^2(1-|z|)dxdy$$
is a Carleson measure in $\mathbb D$ (see e.g. Chapter VI in \cite{Gar}).

\begin{theo}\label{E2A0}
Let $\Omega$ be a Jordan domain containing $0$ and $\varphi$ the Riemann mapping from $\mathbb D$ onto $\Omega$ fixing $0$. The following statements hold: 
\begin{enumerate}
    \item[\rm(1)] If $\log\varphi'\in {\rm BMOA}(\mathbb D)$ then  $ E^2(\Omega)\subset \mathcal{A}^0(\Omega)$;
    \item[\rm(2)] If $\Omega$ is a chord-arc domain then $\mathcal{A}^0(\Omega)\subset E^2(\Omega)$.
\end{enumerate}
Moreover, the inclusions are continuous with respect to the norms $\Vert\cdot\Vert_{E^2(\Omega)}$ and $\Vert\cdot\Vert_{\mathcal H^0(\Omega)}$. 
\end{theo}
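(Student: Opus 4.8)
The plan is to route both inclusions through the single holomorphic identity, valid for every $f$ holomorphic on $\Omega$,
\[
V_0 f = T_0 f - \tfrac12\,S(T_0 f),\qquad (Sg)'(z)=g(z)\frac{\varphi''(z)}{\varphi'(z)},
\]
which is the $s=0$ case of the decomposition displayed just before the statement. I would combine this with the two ``dictionary'' entries $f\in E^2(\Omega)\iff T_0 f\in H^2(\mathbb D)$ (the definition of $E^2$) and $f\in\mathcal A^0(\Omega)\iff V_0 f\in H^2(\mathbb D)$, the latter because $V_0$ is the $s=0$ isomorphism of \eqref{Vs}, because $\mathcal A^0(\mathbb D)=H^2(\mathbb D)$ (the analytic part of $\mathcal H^0(\mathbb D\!\to\!\mathbb T)=L^2(\mathbb T)$), and because $\|f\|_{\mathcal A^0(\Omega)}\approx\|V_0f\|_{H^2}$ and $\|f\|_{E^2(\Omega)}\approx\|T_0f\|_{H^2}$. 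In this way the whole theorem reduces to comparing $\|T_0f\|_{H^2}$ with $\|V_0f\|_{H^2}$, that is, to the mapping behaviour of $S$ on $H^2$.

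For statement (1) the goal is that $S$ be bounded on $H^2$. Writing $G=T_0f\in H^2$ and using $(SG)'=G\,\varphi''/\varphi'$ together with $SG(0)=0$, one has
\[
\|SG\|_{H^2}^2\approx\iint_{\mathbb D}|(SG)'(z)|^2(1-|z|)\,dxdy=\iint_{\mathbb D}|G(z)|^2\,d\mu(z),\qquad d\mu=\Big|\frac{\varphi''}{\varphi'}\Big|^2(1-|z|)\,dxdy.
\]
Since $\log\varphi'\in\mathrm{BMOA}(\mathbb D)$, the Fefferman--Stein criterion recalled above makes $d\mu$ a Carleson measure, so the Carleson embedding theorem bounds the right-hand side by $C\|G\|_{H^2}^2$. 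Hence $SG\in H^2$ with $\|SG\|_{H^2}\lesssim\|G\|_{H^2}$, and therefore $V_0f=G-\tfrac12 SG\in H^2$ with $\|V_0f\|_{H^2}\lesssim\|T_0f\|_{H^2}$. Unravelling the dictionary, this is precisely $E^2(\Omega)\subset\mathcal A^0(\Omega)$ together with continuity of the inclusion.

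For statement (2) the inclusion points the other way and cannot be deduced from the boundedness of $S$ alone; here I would exploit that a chord-arc $\Omega$ has $\omega:=|\varphi'|\in A_\infty(\mathbb T)$ (see \cite{JK}). Fixing $f\in\mathcal A^0(\Omega)$ and setting $g=f\circ\varphi$, the change of variables and Koebe distortion underlying \eqref{Vs} give $\|f\|_{\mathcal A^0(\Omega)}^2\approx\iint_{\mathbb D}|g'(z)|^2|\varphi'(z)|(1-|z|)\,dxdy$. The crucial geometric step is to replace the interior weight $|\varphi'(z)|$ by a boundary average: as $\log|\varphi'|$ is the Poisson extension of its boundary values (chord-arc domains are Smirnov), one has $|\varphi'(z)|=\exp P[\log\omega](z)\approx\exp\langle\log\omega\rangle_{I_z}$, and the $A_\infty$ (reverse Jensen) inequality upgrades this to $|\varphi'(z)|\approx\langle\omega\rangle_{I_z}$, the average of $\omega$ over the boundary arc $I_z$ of length $\approx 1-|z|$ beneath $z$. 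A Fubini computation then converts the area integral into $\int_{\mathbb T}A(g)^2\,\omega\,d\sigma$, with $A(g)$ the Lusin area integral. Finally the Gundy--Wheeden good-$\lambda$ inequality for $A_\infty$ weights yields $\int_{\mathbb T}N(g)^2\omega\lesssim\int_{\mathbb T}A(g)^2\omega+|g(0)|^2$ for the nontangential maximal function $N(g)$; since $|g|\le N(g)$ on $\mathbb T$ this gives $g\in L^2(\omega)$, hence $T_0f=g(\varphi')^{1/2}\in H^2$, i.e. $f\in E^2(\Omega)$ with $\|f\|_{E^2}\lesssim|f(0)|+\|f\|_{\mathcal A^0}$.

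The \textbf{main obstacle is part (2)}, which carries all the genuine weighted-theory content: first, justifying the pointwise identification $|\varphi'(z)|\approx\langle\omega\rangle_{I_z}$, for which both the harmonicity of $\log|\varphi'|$ (Smirnov property of chord-arc domains) and the $A_\infty$ reverse-Jensen bound are indispensable — note that under the mere BMOA hypothesis of part (1) this comparison fails; second, invoking the $A_\infty$ equivalence of the area integral and the nontangential maximal function; and third, upgrading the boundary conclusion $g\in L^2(\omega)$ to genuine membership of $T_0f$ in the Hardy space $H^2$ (equivalently, control of $\sup_r\int_{\Gamma_r}|f|^2\,d\sigma$), which I would obtain from the maximal-function bound together with the same interior weight estimate. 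Part (1), by contrast, is a clean one-sided Carleson embedding and should present no difficulty.
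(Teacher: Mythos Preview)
Your proposal is correct and follows essentially the same route as the paper. Part~(1) is identical: both arguments use the identity $(V_0f)'=(T_0f)'-\tfrac12\,T_0f\cdot\varphi''/\varphi'$ together with the Carleson embedding for the measure $|\varphi''/\varphi'|^2(1-|z|)\,dxdy$ coming from $\log\varphi'\in\mathrm{BMOA}$. For part~(2) the paper proceeds exactly as you do---pull back to the disk, use $|\varphi'|\in A_\infty$ for chord-arc domains, and invoke the Gundy--Wheeden equivalence between the square function and the nontangential maximal function in $L^2(\mathbb T,|\varphi'|\,d\theta)$---except that the paper uses the radial $\mathsf g$-function rather than the Lusin area function and absorbs your explicit interior-to-boundary weight comparison $|\varphi'(z)|\approx\langle|\varphi'|\rangle_{I_z}$ into the phrase ``in other words'' and a citation of Jerison--Kenig; your version simply spells out that step.
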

\begin{proof}
    Suppose  $\log\varphi'\in {\rm BMOA}(\mathbb D)$; that is, $\mu$ is a Carleson measure in $\mathbb D$. 
 Let $f \in E^2(\Omega)$ so that $T_0f \in E^2(\mathbb D)$. Then by Carleson (see e.g. Theorem 3.9 in \cite{Gar}) 
$$
\iint_{\mathbb D}|T_0(f)|^2 d\mu \leq C\|T_0f\|_{E^2(\mathbb D)}^2
$$
where $C$ is a constant depending only on the Carleson norm of $\mu$. 
 Since $E^2(\mathbb D) = \mathcal A^0(\mathbb D)$ we have
$$
\iint_{\mathbb D}|(T_0f)'|^2(1-|z|)dxdy < \infty.
$$
Finally, 
\begin{align*}
    \iint_{\mathbb D} |(V_0f)'(z)|^2(1-|z|)dxdy &= \iint_{\mathbb D} \Big|(T_0f)'(z) - \frac{1}{2}T_0f(z)\cdot\frac{\varphi''(z)}{\varphi'(z)}\Big|^2(1-|z|)dxdy\\
    &\leq 2\iint_{\mathbb D}|(T_0f)'|^2(1-|z|)dxdy + \frac{1}{2}\iint_{\mathbb D}|T_0(f)|^2 d\mu\\
    & <\infty.
\end{align*}
Combined with \eqref{Vs}, 
this completes the proof of  statement $(1)$.

For the proof of $(2)$ we will need the definition of the (Littlewood-Paley) $\mathsf g$-function of a holomorphic function $f$ in $\mathbb D$:
$$ \mathsf g(f)(e^{i\theta})=\left(\int_0^1(1-r)|f'(re^{i\theta})|^2dr\right)^{1/2}.$$
Suppose now that $f\in \mathcal{A}^0(\Omega)$. Then $V_0(f)\in \mathcal{A}^0(\mathbb D)$, which in turn implies that
$$ \iint_{\mathbb D}(1-|u|)|(f\circ \varphi)'(u)|^2|\varphi'(u)|dudv < \infty,$$
or, in other words, that $\mathsf g(f\circ \varphi)\in L^2(\mathbb T,|\varphi'|d\theta)$. Since $\Gamma$ is assumed to be chord-arc, we have that $|\varphi'|$ has the weight $A_\infty$ on $\mathbb T$. 
We can then apply a theorem of Gundy and Wheeden \cite{GW} (see also \cite{JK}) which implies that $\mathsf g(f\circ \varphi)\in L^2(\mathbb T,|\varphi'|d\theta)$ if and only if  the non-tangential maximal function $\mathsf n(f\circ \varphi)$ of $f\circ\varphi$ is in $L^2(\mathbb T,|\varphi'|d\theta)$. On the other hand,  for any ``circular curve'' $\Gamma_r$  it holds that (see Page 233 in \cite{JK})
$$
\int_{\Gamma_r}|f(\zeta)|^2d\sigma(\zeta) \leq C\|\mathsf n(f\circ\varphi)\|^2_{L^2(\mathbb T,|\varphi'|d\theta)},
$$
which implies  $f\in E^2(\Omega)$. 
\end{proof}

Notice that for this theorem we do not need  $|\varphi'|$ having $A_2$ on $\mathbb T$.
This condition is nevertheless necessary for the following corollary to hold since if it is not attached then the (real) space of real parts of $E^2(\Omega\!\to\!\Gamma)$ functions is a proper subspace of the real space $L_\mathbb R^2(\Gamma, d\sigma)$  
\begin{cor}\label{0trace} 
Let $\Omega$ be a chord-arc domain bounded by $\Gamma$.  If $|\varphi'|\in A_2$ then $L^2(\Gamma, d\sigma)=\mathcal{H}^0(\Omega\!\to\!\Gamma)$.
\end{cor}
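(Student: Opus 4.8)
The plan is to establish the two inclusions separately, transferring everything to the unit disk through the Riemann map $\varphi$ and exploiting the weighted Littlewood--Paley machinery already used for Theorem \ref{E2A0}. Throughout set $U=u\circ\varphi$ for a harmonic $u$ on $\Omega$ and $\phi=f\circ\varphi$ for $f$ on $\Gamma$, and normalize by $U(0)=0$ (the $\mathsf g$-function annihilates constants, which lie in both spaces and are handled trivially). The change of variables $z=\varphi(w)$ together with the conformal identity $|\nabla U(w)|=|\nabla u(\varphi(w))|\,|\varphi'(w)|$ and the Koebe estimate $d(\varphi(w),\Gamma)\approx(1-|w|)|\varphi'(w)|$ shows that
\begin{equation*}
\|u\|_{\mathcal H^0(\Omega)}^2\approx\iint_{\mathbb D}|\nabla U(w)|^2(1-|w|)\,|\varphi'(w)|\,dxdy\approx\|\mathsf g(U)\|_{L^2(\mathbb T,\,|\varphi'|d\theta)}^2,
\end{equation*}
where $\mathsf g(U)(e^{i\theta})^2=\int_0^1(1-r)|\nabla U(re^{i\theta})|^2dr$ is the harmonic $\mathsf g$-function; the second comparison uses $|\varphi'|\in A_\infty$, valid since $\Gamma$ is chord-arc, and is exactly the identification made in the proof of Theorem \ref{E2A0}(2). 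On the boundary side, $f\in L^2(\Gamma,d\sigma)$ is equivalent to $\phi\in L^2(\mathbb T,|\varphi'|d\theta)$ because $d\sigma=|\varphi'|\,d\theta$.

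For the inclusion $\mathcal H^0(\Omega\!\to\!\Gamma)\subset L^2(\Gamma,d\sigma)$ I would argue as follows. If $f$ is the trace of some $u\in\mathcal H^0(\Omega)$, the display gives $\|\mathsf g(U)\|_{L^2(|\varphi'|)}<\infty$, so the theorem of Gundy--Wheeden \cite{GW} (which needs only $|\varphi'|\in A_\infty$) yields $\|\mathsf n(U)\|_{L^2(|\varphi'|)}<\infty$ for the nontangential maximal function $\mathsf n(U)$. Since the boundary values satisfy $|\phi|\le\mathsf n(U)$ almost everywhere, $\phi\in L^2(|\varphi'|)$, i.e. $f\in L^2(\Gamma,d\sigma)$, with matching norm control. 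This direction uses nothing beyond the chord-arc hypothesis; alternatively one may write the complex harmonic $u$ as $F+\overline G$ with $F,G$ holomorphic, note that finiteness of $\|u\|_{\mathcal H^0(\Omega)}$ forces $F,G\in\mathcal A^0(\Omega)=E^2(\Omega)$ by Theorem \ref{E2A0}, and invoke that $E^2$ traces lie in $L^2(\Gamma,d\sigma)$.

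For the reverse inclusion $L^2(\Gamma,d\sigma)\subset\mathcal H^0(\Omega\!\to\!\Gamma)$, which is where $|\varphi'|\in A_2$ is indispensable, I would start from $f\in L^2(\Gamma)$, so $\phi\in L^2(|\varphi'|)$, and let $U$ be its Poisson extension, $u=U\circ\varphi^{-1}$ the harmonic extension of $f$ to $\Omega$. The nontangential maximal function of a Poisson integral is dominated pointwise by the Hardy--Littlewood maximal function $M\phi$; since $|\varphi'|\in A_2$, the operator $M$ is bounded on $L^2(|\varphi'|)$, whence $\|\mathsf n(U)\|_{L^2(|\varphi'|)}\lesssim\|\phi\|_{L^2(|\varphi'|)}<\infty$. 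Applying Gundy--Wheeden in the other direction gives $\|\mathsf g(U)\|_{L^2(|\varphi'|)}<\infty$, and the opening display then forces $\|u\|_{\mathcal H^0(\Omega)}<\infty$, so $f$ is the trace of an element of $\mathcal H^0(\Omega)$.

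The main obstacle is precisely this last step: passing from the boundary datum $\phi$ to the finiteness of $\|\mathsf n(U)\|_{L^2(|\varphi'|)}$. This weighted maximal estimate is the analytic heart of the matter and is exactly where $A_2$ cannot be relaxed — consistent with the remark before the statement, dropping $A_2$ makes the real traces of $E^2(\Omega)$ a proper subspace of $L_{\mathbb R}^2(\Gamma,d\sigma)$, so the inclusion genuinely fails. The remaining points are bookkeeping: justifying the area-versus-boundary comparison of the weighted $\mathsf g$-function for the $A_\infty$ weight $|\varphi'|$ (as in Theorem \ref{E2A0}) and accounting for additive constants, both of which are routine.
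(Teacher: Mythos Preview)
Your argument is correct. The paper does not spell out a proof, but the sentence preceding the corollary signals the intended route: by Theorem~\ref{E2A0} one has $\mathcal A^0(\Omega)=E^2(\Omega)$, so the equality $\mathcal H^0(\Omega\!\to\!\Gamma)=L^2(\Gamma,d\sigma)$ reduces to the statement that real parts of $E^2(\Omega\!\to\!\Gamma)$ functions exhaust $L^2_{\mathbb R}(\Gamma,d\sigma)$, i.e.\ that the conjugate operator $f\mapsto\tilde f$ is bounded on $L^2(\Gamma,d\sigma)$, which is precisely the $A_2$ condition on $|\varphi'|$.

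Your proof of the critical inclusion $L^2\subset\mathcal H^0$ takes a parallel but distinct path: rather than the weighted Hilbert transform you invoke the weighted Hardy--Littlewood maximal operator (whose $L^2(\omega)$-boundedness is likewise characterised by $\omega\in A_2$) to control $\mathsf n(U)$, and then Gundy--Wheeden to pass back to $\mathsf g(U)$. Both arguments ultimately rest on the same $A_2$ hypothesis and on the Gundy--Wheeden machinery already deployed in Theorem~\ref{E2A0}(2); yours has the minor advantage of staying with the harmonic extension throughout, avoiding the decomposition into holomorphic pieces and any appeal to Smirnov-type boundary characterisations of $E^2(\Omega)$, while the paper's route makes the connection to the conjugate operator of Section~5.1 (and hence to Theorem~\ref{conj}) more transparent.
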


\subsection{Interpolation}
The aim of this last sub-section is to prove the following theorem using the interpolation of Bergman spaces and Calderon's interpolation theorem. 

\begin{theo} \label{interpol} 
Let $\Omega$ be a chord-arc domain bounded by $\Gamma$ and $\varphi$ its Riemann mapping fixing $0$. If $\Gamma$ is such that $|\varphi'| \in A_2$ on $\mathbb T$ then, for $0 \leq s \leq 1$, $\mathcal{H}^s(\Omega\!\to\!\Gamma) = H^s(\Gamma)$, and moreover, the identity  operator $\mathcal{H}^s(\Omega\!\to\!\Gamma) \to H^s(\Gamma)$ is a bounded isomrophism. 
The conclusions hold in particular for Lipschitz domains.
\end{theo}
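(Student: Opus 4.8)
The plan is to pin both scales $\{H^s(\Gamma)\}$ and $\{\mathcal H^s(\Omega\!\to\!\Gamma)\}$ down at the two endpoints $s=0$ and $s=1$ and then run complex interpolation on each. First I would record that the $H$-scale is an interpolation scale: since $|\lambda'|=1$ and $\lambda$ is bi-Lipschitz, the map $g\mapsto g\circ\lambda$ is an isomorphism $H^s(\mathbb T)\to H^s(\Gamma)$ simultaneously for $s=0,1$, so Calder\'on's theorem (Theorem \ref{Cald}) transported through $\lambda$ gives $H^s(\Gamma)=[H^0(\Gamma),H^1(\Gamma)]_s$ with $H^0(\Gamma)=L^2(\Gamma)$. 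For the $\mathcal H$-side I would first reduce to the analytic subspace: a complex harmonic $u$ on $\Omega$ splits as $u=F+\bar G$ with $F,G$ holomorphic, and $|\nabla u|^2=2(|F'|^2+|G'|^2)$, so $\mathcal H^s(\Omega)=\mathcal A^s(\Omega)\oplus\overline{\mathcal A^s(\Omega)}$ as an orthogonal direct sum; since interpolation commutes with finite direct sums, it suffices to interpolate the analytic scale $\mathcal A^s(\Omega)$. The boundedness of the conjugation $f\mapsto\tilde f$ (Theorem \ref{conj}), which is exactly where $|\varphi'|\in A_2$ is used, guarantees that this analytic/anti-analytic splitting is compatible with the boundary traces on both scales.

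Next I would match the endpoints. The case $s=0$ is already Corollary \ref{0trace}: $\mathcal H^0(\Omega\!\to\!\Gamma)=L^2(\Gamma)=H^0(\Gamma)$. The key observation for $s=1$ is that this case is the $s=0$ case applied to the derivative. Indeed $f\in H^1(\Gamma)$ means the arc-length derivative $f'$ lies in $L^2(\Gamma)$; writing the analytic part as $F|_\Gamma$ and using $|\partial_\sigma F|=|F'|$ on $\Gamma$ (because $|z'(\sigma)|=1$), this is equivalent to $F'\in E^2(\Omega)$. By Theorem \ref{E2A0}, under the standing hypotheses $E^2(\Omega)=\mathcal A^0(\Omega)$, so $F'\in\mathcal A^0(\Omega)$, which is precisely the analytic $\mathcal H^1$-condition. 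Running the same bookkeeping on the anti-analytic part yields $\mathcal H^1(\Omega\!\to\!\Gamma)=H^1(\Gamma)$. This also explains why the weight $d(z,\Gamma)^{1-2s}$, which degenerates as $s\to1$ (and as $s\to0$), must be replaced at the endpoints by the Hardy space $E^2$; Theorem \ref{E2A0} is exactly the bridge.

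It then remains to show that the analytic scale interpolates, namely $\mathcal A^s(\Omega)=[\mathcal A^0(\Omega),\mathcal A^1(\Omega)]_s$ for $0<s<1$. Here I would pass to the disk through the family $V_s$ of \eqref{Vs}, which for each fixed $s$ is a bounded isomorphism $\mathcal A^s(\Omega)\to\mathcal A^s(\mathbb D)$, and on the disk the scale is diagonalized by Taylor coefficients, $\mathcal A^s(\mathbb D)\simeq\{(a_n):\sum_n n^{2s}|a_n|^2<\infty\}$, so that $[\mathcal A^{0}(\mathbb D),\mathcal A^{1}(\mathbb D)]_s=\mathcal A^s(\mathbb D)$ is immediate from interpolation of weighted $\ell^2$ spaces. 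Because $V_s$ depends holomorphically on $s$ through the factor $(\varphi')^{1/2-s}$, I would realize the couple via the Stein analytic family $z\mapsto V_z$ and deduce, from the endpoint bounds at $\mathrm{Re}\,z=0,1$ together with \eqref{norm}, that $V_s$ and $V_s^{-1}$ transport the interpolation couple on $\Omega$ to the explicit one on $\mathbb D$. Combining the two scale identifications with the endpoint equalities $\mathcal H^0=H^0$ and $\mathcal H^1=H^1$ gives $\mathcal H^s(\Omega\!\to\!\Gamma)=[\mathcal H^0,\mathcal H^1]_s=[H^0,H^1]_s=H^s(\Gamma)$, and tracking the constants in \eqref{Vs} and \eqref{norm} shows the identity map is a bounded isomorphism. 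Lipschitz domains satisfy the Helson--Szeg\H{o} condition, hence $|\varphi'|\in A_2$, so they are covered.

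The main obstacle I anticipate is precisely the interpolation of the analytic (Bergman) scale, and within it the control of the $s$-dependent operator $V_s$ as an analytic family. The difficulty is concrete: $|(\varphi')^{1/2-s-it}|=|\varphi'|^{1/2-s}e^{t\arg\varphi'}$, and since $\log\varphi'\in\mathrm{BMOA}$ the argument $\arg\varphi'$ is unbounded, so the strip bounds needed for Stein interpolation are not automatic. This is exactly the point where $|\varphi'|\in A_2$ (equivalently, via Helson--Szeg\H{o}, $\arg\varphi'=Hu-v$ with $\|v\|_\infty<\pi/2$) must be used to tame the family; equivalently, for $0<s<1$ the weight $d(z,\Gamma)^{1-2s}$ satisfies the Bekoll\'e--Bonami condition, so the Bergman projection is bounded on $L^2(\Omega,d(z,\Gamma)^{1-2s})$ and the holomorphic subspaces interpolate as complemented subspaces. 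Verifying this boundedness uniformly across the strip, and splicing it to the Hardy-space endpoints supplied by Theorem \ref{E2A0}, is the technical heart of the argument.
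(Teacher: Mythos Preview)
Your outline matches the paper's proof closely: both pin down the endpoints $s=0,1$ using Theorem~\ref{E2A0}, Corollary~\ref{0trace} and Theorem~\ref{conj}, and then interpolate against Calder\'on's identification $H^s(\Gamma)\simeq[H^0(\mathbb T),H^1(\mathbb T)]_s$. The paper differs from your plan in one respect: instead of trying to show $\mathcal A^s(\Omega)=[\mathcal A^0(\Omega),\mathcal A^1(\Omega)]_s$ and transporting the couple through the $s$-dependent isomorphisms $V_s$, it quotes the standard-weight Bergman interpolation $[A^2_1,A^2_{-1}]_s=A^2_{1-2s}$ on $\mathbb D$ (Zhao--Zhu) and then applies the functorial property of complex interpolation to a linear map $\Pi$ sending $V_s(f)'\in A^2_{1-2s}$ to the boundary trace $u\in H^s(\Gamma)$, and to $\Pi^{-1}$. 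Your ``Taylor coefficient'' computation of $[\mathcal A^0(\mathbb D),\mathcal A^1(\mathbb D)]_s$ is exactly this Bergman result in disguise, so the disk-side interpolation you need is available off the shelf.

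The obstacle you single out---that $V_s$ depends holomorphically on $s$ through $(\varphi')^{1/2-s}$ and that $\arg\varphi'$ is only BMO, so a naive Stein analytic-family bound is not automatic---is genuine, and it is present in the paper's operator $\Pi$ as well (since $\Pi$ is defined via $V_s$), though the paper does not comment on it. The paper's concluding Remark points to precisely the remedy you propose: work with the $s$-independent identification $f\mapsto F'=(f\circ\varphi)'$, so that $f\in\mathcal A^s(\Omega)$ becomes $F'\in A^2_\omega$ with $\omega(z)=((1-|z|)|\varphi'(z)|)^{1-2s}$, check that $|\varphi'|\in A_2$ forces $\omega\in B_2(\mathbb D)$ for every $s\in[0,1]$ so that the Bergman projection is bounded, and then run Stein--Weiss on the $L^2(\omega)$ scale. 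Either packaging---Zhao--Zhu plus a fixed operator, or Bekoll\'e--Bonami plus Stein--Weiss---closes the argument; your proposal already contains both.
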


Theorem \ref{interpol} does immediately imply the following.
\begin{cor} Under the assumption of  Theorem \ref{interpol} the following assertions hold:
\begin{enumerate}
    \item[\rm(i)] The identity operator among any two of $H^s(\Gamma)$, $\mathcal H^s(\Omega_i\!\to\!\Gamma)$ and $\mathcal H^s(\Omega_e\!\to\!\Gamma)$ is a bounded isomorphism with respect to $\|\cdot\|_{H^s(\Gamma)}$, $\|\cdot\|_{\mathcal H^s(\Omega_i)}$ and $\|\cdot\|_{\mathcal H^s(\Omega_e)}$. 
    \item[\rm(ii)] If $s \in (1/2, 1)$ then each $f \in H^s(\Gamma)$ is bounded and continuous. 
\end{enumerate}
    \end{cor}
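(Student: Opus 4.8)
The plan is to realize both families $\{H^s(\Gamma)\}_{0\le s\le 1}$ and $\{\mathcal H^s(\Omega\!\to\!\Gamma)\}_{0\le s\le 1}$ as complex interpolation scales sharing the same two endpoints, and then to let the identity map ride along the scales: once it is a bounded isomorphism at $s=0$ and at $s=1$, the functorial estimate \eqref{norm}, applied in both directions, forces it to be a bounded isomorphism between $[\mathcal H^0,\mathcal H^1]_s$ and $[H^0,H^1]_s$ for every intermediate $s$, which is the assertion. A preliminary reduction is to analytic functions: writing a harmonic $u\in\mathcal H^s(\Omega)$ as $F+\bar G$ with $F,G$ analytic gives $|\nabla u|^2=2(|F'|^2+|G'|^2)$, so $\mathcal H^s(\Omega\!\to\!\Gamma)=\mathcal A^s(\Omega\!\to\!\Gamma)+\overline{\mathcal A^s(\Omega\!\to\!\Gamma)}$, while Theorem \ref{conj} makes $H^s(\Gamma)$ conjugation-stable and splits it into its $\Omega$-analytic part and the conjugate thereof. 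Thus it suffices to match the analytic pieces and then restore the harmonic statement by adding conjugates, the conjugation being bounded on both spaces under $A_2$. Throughout, the hypothesis $|\varphi'|\in A_2$ is what lets me pass between the \emph{conformal} picture (natural for $\mathcal H^s$, via $\varphi$ and the operator $V_s$ of \eqref{Vs}) and the \emph{arc-length} picture (natural for $H^s(\Gamma)$, via $\lambda$), the two differing by $h=\lambda^{-1}\circ\varphi$ with $|h'|=|\varphi'|$.

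For the endpoints I would argue as follows. At $s=0$, Corollary \ref{0trace} gives $\mathcal H^0(\Omega\!\to\!\Gamma)=L^2(\Gamma,d\sigma)=H^0(\Gamma)$, and its analytic refinement is exactly Theorem \ref{E2A0}, which identifies $\mathcal A^0(\Omega)$ with $E^2(\Omega)$ for chord-arc $\Omega$. At $s=1$ I would avoid the literal weight $d(z,\Gamma)^{-1}$, for which the area integral diverges, and use instead the differentiated description: $f\in H^1(\Gamma)$ means $f'\in L^2(\Gamma)=H^0(\Gamma)$, while $F\in\mathcal A^1(\Omega)$ should mean $F'\in\mathcal A^0(\Omega)=E^2(\Omega)$. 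The $s=1$ endpoint then reduces by one differentiation to the settled $s=0$ endpoint, the boundedness of the decomposition on $H^1(\Gamma)$ being furnished by Section 4 (David's theorem together with Theorem \ref{H1}) and by Theorem \ref{conj}.

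On the $H^s(\Gamma)$ side, Calder\'on's interpolation theorem (Theorem \ref{Cald}), transported to $\Gamma$ by the bi-Lipschitz map $\lambda$, gives $H^s(\Gamma)=[H^0(\Gamma),H^1(\Gamma)]_s$. On the $\mathcal H^s$ side, I would view $\mathcal A^s(\Omega)$ as the analytic subspace, at the level of derivatives $F'$, of the weighted space $L^2(\Omega,d(z,\Gamma)^{1-2s})$, and invoke the exact weighted interpolation $[L^2(w_0),L^2(w_1)]_\theta=L^2(w_0^{1-\theta}w_1^{\theta})$ with $w_j=d(\cdot,\Gamma)^{1-2s_j}$, whose geometric mean is again $d(\cdot,\Gamma)^{1-2s}$; the analytic subspace persists under interpolation because the Beurling-type projection onto analytic functions (the operator $B$ of Section 2) is bounded on these weighted spaces precisely when $d(\cdot,\Gamma)^{1-2s}\in A_2$, which for a chord-arc curve ($h(\Gamma)=1$ in Theorem \ref{As2}) holds for all $0<s<1$. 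Combining the two scales with the endpoint identifications and \eqref{norm} completes the proof; the claim for Lipschitz domains then follows because there $|\varphi'|$ satisfies the Helson-Szeg\"o condition, hence lies in $A_2$.

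The hard part will be making the $\mathcal H^s$-family a genuine complex interpolation scale up to and including its endpoints. Two borderline phenomena must be reconciled. First, the isomorphism $V_s$ to the disk depends on $s$—only for $s=1/2$ is it plain composition with $\varphi$—so one cannot simply pull back a single fixed scale on $\mathbb D$, and the parametrization mismatch $h$, with $|h'|=|\varphi'|\in A_2$, must be controlled uniformly across the scale so that the conformal norm delivered by $V_s$ and the arc-length Douglas norm delivered by $\lambda$ stay comparable in the limit. Second, the weight $d(\cdot,\Gamma)^{1-2s}$ leaves $A_2$ exactly at $s=0$ and $s=1$, the Hardy and Hardy–Sobolev endpoints where the analytic projection degenerates; matching this limiting behaviour against the interior interpolation is where the bulk of the technical effort, and the essential use of the $A_2$ hypothesis, will be concentrated.
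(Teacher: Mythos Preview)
You have misidentified the target. In the paper the Corollary is stated as an \emph{immediate} consequence of Theorem~\ref{interpol} and carries no proof of its own: part~(i) is just Theorem~\ref{interpol} applied once to $\Omega_i$ and once to $\Omega_e$, giving $\mathcal H^s(\Omega_i\!\to\!\Gamma)=H^s(\Gamma)=\mathcal H^s(\Omega_e\!\to\!\Gamma)$ with equivalent norms; part~(ii) follows because $H^s(\Gamma)=\mathcal H^s(\Gamma)$ and, by Propositions~1.1 and~1.2, every element of $\mathcal H^s(\Gamma)$ with $s>1/2$ extends to a function in $\Lambda^{s-1/2}(\bar\Omega)$, hence is bounded and continuous on $\Gamma$. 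Your proposal instead re-derives the content of Theorem~\ref{interpol} itself, and never addresses part~(ii) at all.

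If your intent was to supply an alternative proof of Theorem~\ref{interpol}, note that your route---Stein--Weiss interpolation of the weighted spaces $L^2(\Omega,d(\cdot,\Gamma)^{1-2s})$ together with a Beurling-type analytic projection bounded for $A_2$ weights---is precisely the variant sketched in the Remark following the paper's proof, whereas the paper's own argument transfers via $V_s$ to Bergman spaces $A^2_{1-2s}$ on $\mathbb D$ and invokes the Zhao--Zhu interpolation $[A^2_1,A^2_{-1}]_s=A^2_{1-2s}$. The advantage of the paper's route is that the endpoint $s=1$ is handled cleanly by the identification $A^2_{-1}=E^2(\mathbb D)$, whereas in your scheme the weight $d(\cdot,\Gamma)^{-1}$ is not locally integrable and the analytic projection fails at both endpoints; your workaround (``differentiate and reduce $s=1$ to $s=0$'') is plausible but you have not explained how the two interpolation scales, one for $F$ and one for $F'$, are made to coincide at the endpoint so that the functorial estimate actually applies across the whole range.
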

\noindent 
We remark that assertion $\rm (ii)$ is consistent with the classical Sobolev embedding theorem saying that if $s > n/2$ then each $f \in H^s(\mathbb R^n)$ is bounded and continuous (see \cite{Tay1}).

\begin{proof}[Proof of Theorem \ref{interpol}]
Recall that 
Bergman spaces with standard weights $A^p_\alpha$ are defined as the sets of analytic functions $g$ on $\mathbb D$ such that $g \in L^p(\mathbb D, (1-|z|^2)^{\alpha}dxdy)$, i.e., 
$$
\|g\|_{A^p_{\alpha}}^p = \iint_{\mathbb D} |g(z)|^p (1-|z|^2)^{\alpha}dxdy < \infty
$$
where $p > 0$ and $\alpha > -1$. Here,  the assumption that $\alpha > -1$ is essential because the space $L^p(\mathbb D, (1-|z|^2)^{\alpha}dxdy)$ does not contain any holomorphic function other than $0$ when $\alpha \leq -1$. Zhao-Zhu  extended the definition of $A^p_{\alpha}$ to the case where $\alpha$ is any real number; that is consistent with the traditional definition when $\alpha > -1$ (see Theorem 13 in \cite{ZZ}). From the definition it can be seen that if $\alpha < \beta$ then the strict inclusion $A^p_{\alpha} \subset A^p_{\beta}$ holds. 

For our purpose we in particular mention that the space $A^2_{-1}$ is the classical Hardy space $E^2(\mathbb D)$, i.e., that is the set of analytic functions $g$ in $\mathbb D$ such that
$$
\int_{\mathbb D}|g'(z)|^2(1-|z|^2)dxdy < \infty.
$$
We also need to use the case of $p = 2$ and $\alpha = 1-2s$ where $0 \leq s < 1$, namely, 
$$
A^2_{1-2s} = \{g \;\text{analytic in}\; \mathbb D: \iint_{\mathbb D}|g(z)|^2 (1-|z|^2)^{1-2s} dxdy < \infty\}.
$$
This is a closed linear subspace of the Hilbert space $L^2(\mathbb D, (1-|z|^2)^{1-2s}dxdy)$. For these spaces, the following interpolation theorem (see Theorem 36 in \cite{ZZ}) holds true: 
\begin{equation}\label{Binterpol}
    [A^2_1, A^2_{-1}]_{s} = A^2_{1-2s},\qquad 0 < s < 1
\end{equation}
with equivalent norms.

Suppose $\varphi$ is a conformal map from $\mathbb D$ onto $\Omega$ fixing $0$. For any $f \in \mathcal A^s(\Omega)$, $0 \leq s < 1$, we have seen from \eqref{Vs} that
$$
\iint_\mathbb D |V_s(f)'(z)|^2 (1-|z|)^{1-2s} dxdy \approx \iint_{\Omega} |f'(\zeta)|^2 d(\zeta,\Gamma)^{1-2s} d\xi d\eta
$$
where the implicit constants depend only on $s$, and 
$$V_s(f)'(z)=(f\circ\varphi)'(z)\varphi'(z)^{1/2-s}.$$
From it we see that  
\begin{equation}\label{neq1}
    f \in \mathcal A^s(\Omega) \;\Leftrightarrow \; V_s(f)' \in A^2_{1-2s} 
\end{equation}
with comparable norms. 
Note that any $g \in A^2_{1-2s}$ can be written in the form $V_s(f)'$ by choosing $f(z) = \int_0^zg\circ\varphi^{-1}(u)(\varphi^{-1})'(u)^{3/2-s} du$. 

Motivated by  equivalence \eqref{neq1}, we define $\mathcal A^1(\Omega)$ as the set of analytic functions $f$ on $\Omega$ satisfying $V_1(f)' \in A^2_{-1}$. It is easy to see that $f$ is an anti-derivative of a function in $E^2(\Omega)$. To be more precise,  recall that $A^2_{-1} = E^2(\mathbb D)$ we have 
$$
\int_{\Gamma} |f'|^2 d\sigma = \int_{\mathbb T} |f'\circ\varphi(z) \varphi'(z)^{1/2}|^2 |dz| = \int_{\mathbb T} |V_1(f)'(z)|^2 |dz| < \infty.
$$
By this definition,  the space of the boundary trace of $\mathcal H^1(\Omega)$, the harmonic counterpart of $\mathcal A^1(\Omega)$, is just $H^1(\Gamma)$, i.e., $\mathcal H^1(\Omega\!\to\!\Gamma) = H^1(\Gamma)$. 

For any $u \in \mathcal H^s(\Omega)$, $0 \leq s \leq 1$,  set $f = u + i\tilde u$ so that $f\in\mathcal A^s(\Omega)$, and 
$V_s(f)'\in A^2_{1-2s}$. We define the linear operator $\Pi$ in $A^2_{1-2s}$ as 
$$
\Pi\left((f\circ\varphi)'(\varphi')^{1/2-s}\right) = u. 
$$
Suppose $s = 0$.  Since the identity operator from $\mathcal A^0(\Omega)$ onto $E^2(\Omega)$ is a bounded isomorphism by Theorem \ref{E2A0}, it holds that 
\begin{equation}\label{case0-2}
    \|(V_0(f))'\|_{A^2_1}\approx \|f\|_{\mathcal H^0(\Omega)}\approx \|f\|_{E^2(\Omega)}.
\end{equation} 
It follows from Theorem \ref{conj} that 
$$
\int_{\Gamma} |u|^2 d\sigma  \approx \int_{\Gamma} |f|^2 d\sigma.
$$
Then, we conclude that the linear operator $\Pi$ is  bounded from the Bergman space $A^2_1$ to $L^2(\Gamma, d\sigma)$, and by Corollary \ref{0trace} that it is surjective.  Suppose $s = 1$. Since
$$
\|u\|_{H^{1}(\Gamma)} \approx \|f\|_{H^{1}(\Gamma)} \approx \|f\|_{\mathcal H^{1}(\Omega)} \approx \|V_{1}(f)'\|_{A^2_{-1}},
$$
where the first ``$\approx$'' is still due to Theorem \ref{conj}, 
the linear operator $\Pi$ is surjective and bounded from the Bergman space $A^2_{-1}$ to $H^1(\Gamma)$.

Notice that for any $u \in \mathcal H^s(\Omega)$, since $|\nabla u| = |\nabla \tilde u|$ we have
$$
\|u\|_{\mathcal H^s(\Omega)} \approx \|f\|_{\mathcal H^s(\Omega)} \approx \|V_s(f)'\|_{A^2_{1-2s}}.
$$
Now we can invoke the interpolation theorem of Bergman spaces \eqref{Binterpol} and Calder\'on's interpolation theorem (see Theorem \ref{Cald}). It follows from the  functorial property of complex interpolation that the linear operator 
$\Pi$ maps $A^2_{1-2s}$ bounded to $H^s(\Gamma)$ for $0 < s < 1$, and moreover, for any $u \in \mathcal H^s(\Omega)$, $\|u\|_{H^s(\Gamma)} \lesssim \|V_s(f)'\|_{A^2_{1-2s}}$, and thus,  $\|u\|_{H^s(\Gamma)} \lesssim \|u\|_{\mathcal H^s(\Omega)}$. 
If we apply the  functorial property of complex interpolation to the linear operator $\Pi^{-1}$ mapping from 
$L^2(\Gamma, d\sigma)$ to $A^2_1$, then we get that $\Pi^{-1}$ maps $H^s(\Gamma)$ bounded to $A^2_{1-2s}$, and for any $u \in H^s(\Gamma)$, 
$$
\|u\|_{\mathcal H^s(\Omega)}  \approx \|V_s(f)'\|_{A^2_{1-2s}} \lesssim \|u\|_{H^s(\Gamma)}. 
$$ 
This completes the proof of Theorem \ref{interpol}. 
\end{proof}

\begin{rem}
    The above proof of Theorem \ref{interpol} makes full use of the interpolation theorem of Bergman spaces, which was proved using Bergman type projections $P$ from $L^2(\mathbb D, (1-|z|^2)^\alpha dxdy)$ to $A^2_{\alpha}$, and the Stein-Weiss interpolation theorem: for any $\theta \in (0, 1)$, 
    $$
    [L^2(\mathbb D, \omega_0(z)dxdy), L^2(\mathbb D, \omega_1(z)dxdy)]_\theta = L^2(\mathbb D, \omega(z)dxdy)
    $$
    with equal norms, provided that the weight functions have the relation:  $\omega = \omega_0^{1-\theta}\omega_1^{\theta}$. A theorem of Bekolle-Bonami \cite{BB} says that $P$ extends to $L^2(\mathbb D,\omega(z)dxdy)$ if and only if the weight function $\omega\in B_2(\mathbb D)$, where $B_2(\mathbb D)$ is like the Muckenhoupt class $A_2$ on $\mathbb D$, but only for the Carleson squares. 
    
    It is easy to check that if $|\varphi'|\in A_2$ on $\mathbb T$ then $ \omega(z) = \left((1-|z|)|\varphi'(z)|\right)^{1-2s}\in B_2(\mathbb D)$ for all $s\in [0,1].$ On the other hand, by simple computation we have $f \in \mathcal A^s(\Omega)$ if and only if $F = f\circ\varphi$ satisfies that 
    $$
    \int_{\mathbb D}|F'(z)|^2 \omega(z) dxdy < \infty.
    $$
    In other words, $F'$ belongs to the weighted Bergman space $A^2_\omega$. Given these observations, it is possible to give a different proof of Theorem \ref{interpol} using  generalized Bergman projections and the Stein-Weiss interpolation theorem. We will not go into further details here. 
\end{rem}

\bigskip

\textbf{Acknowledgments. }
This work is supported by the National Natural Science Foundation of China (Grant No. 12271218).

\bibliographystyle{abbrv}
\bibliography{ref}

\end{document}